\documentclass[12pt,reqno]{amsart}

\addtolength{\textwidth}{2cm} \addtolength{\hoffset}{-1cm}
\addtolength{\marginparwidth}{-1cm} \addtolength{\textheight}{2cm}
\addtolength{\voffset}{-1cm}
\usepackage{multirow}
\usepackage{hhline}

\usepackage{mathtools}
\usepackage{times}
\usepackage[T1]{fontenc}
\usepackage{mathrsfs}
\usepackage{latexsym}
\usepackage[dvips]{graphics}
\usepackage[titletoc, title]{appendix}
\setcounter{tocdepth}{1}
\usepackage{amsmath,amsfonts,amsthm,amssymb,amscd}
\usepackage[dvipsnames]{xcolor}
\usepackage{hyperref}
\usepackage{amsmath}
\usepackage[utf8]{inputenc}

\usepackage{color}
\usepackage{breakurl}

\usepackage{comment}
\newcommand{\bburl}[1]{\textcolor{blue}{\url{#1}}}

\newtheorem{thm}{Theorem}[section]

\newtheorem{claim}[thm]{Claim}
\newtheorem{lem}[thm]{Lemma}
\newtheorem{prop}[thm]{Proposition}
\newtheorem{exa}[thm]{Example}

\usepackage[utf8]{inputenc}

\DeclareFixedFont{\ttb}{T1}{txtt}{bx}{n}{12} 
\DeclareFixedFont{\ttm}{T1}{txtt}{m}{n}{12}  

\usepackage{color}
\definecolor{deepblue}{rgb}{0,0,0.5}
\definecolor{deepred}{rgb}{0.6,0,0}
\definecolor{deepgreen}{rgb}{0,0.5,0}

\usepackage{listings}

\newcommand\pythonstyle{\lstset{
language=Python,
basicstyle=\ttm,
morekeywords={self},              
keywordstyle=\ttb\color{deepblue},
emph={MyClass,__init__},          
emphstyle=\ttb\color{deepred},    
stringstyle=\color{deepgreen},
frame=tb,                         
showstringspaces=false
}}

\lstnewenvironment{python}[1][]
{
\pythonstyle
\lstset{#1}
}
{}


\newcommand\pythoninline[1]{{\pythonstyle\lstinline!#1!}}

\numberwithin{equation}{section}

\DeclareFontFamily{U}{mathx}{}
\DeclareFontShape{U}{mathx}{m}{n}{<-> mathx10}{}
\DeclareSymbolFont{mathx}{U}{mathx}{m}{n}
\DeclareMathAccent{\widehat}{0}{mathx}{"70}
\DeclareMathAccent{\widecheck}{0}{mathx}{"71}

\begin{document}

\title{Weighted Schreier-type Sets and the Fibonacci Sequence}
\author{H\`ung Vi\d{\^e}t Chu}
\address{Department of Mathematics, Texas A\&M University, College
  Station, TX 77843, USA}
  \email{hungchu1@tamu.edu}
\author{Zachary Louis Vasseur}
\address{Department of Mathematics, Texas A\&M University, College
  Station, TX 77843, USA}
\email{zachary.l.v@tamu.edu}
\thanks{The work was partially supported by the College of Arts \& Sciences at Texas A\&M University.}

\subjclass[2020]{11Y55; 11B37}

\keywords{Schreier sets; Fibonacci; partial sums}

\maketitle

\begin{abstract}
For a finite set $A\subset\mathbb{N}$ and $k\in \mathbb{N}$, let $\omega_k(A) = \sum_{i\in A, i\neq k}1$. For each $n\in \mathbb{N}$, define
$$a_{k, n}\ =\ |\{E\subset \mathbb{N}\,:\, E = \emptyset\mbox{ or } \omega_k(E) < \min E\leqslant \max E\leqslant n\}|.$$
First, we prove that 
$$a_{k,k+\ell} \ =\ 2F_{k+\ell},\mbox{ for all }\ell\geqslant 0\mbox{ and }k\geqslant \ell+2,$$ where $F_n$ is the $n$th Fibonacci number. Second, we show that
$$|\{E\subset \mathbb{N}\,:\, \max  E = n+1, \min E > \omega_{2,3}(E), \mbox{ and }|E|\neq 2\}|\ =\ F_{n},$$
where $\omega_{2,3}(E) = \sum_{i\in E, i\neq 2, 3}1$.
\end{abstract}

\section{Introduction}
Recall that a finite set $E\subset \mathbb{N}$ is called a \textit{Schreier} set if $\min E\geqslant |E|$. Denote the collection of all Schreier sets by $\mathcal{S}$, which includes the empty set. Sets $E$ with $\min E > |E|$ ($\min E = |E|$, respectively) are called \textit{nonmaximal Schreier} sets (\textit{maximal Schreier} sets, respectively). The empty set is vacuously both maximal and nonmaximal. We let $S^{NMAX}$ consist of nonmaximal Schreier sets and let $S^{MAX}$ consist of maximal Schreier sets. Bird \cite{Bi} showed that for each positive integer $n$, 
$$|\{E\in \mathcal{S}\,:\, \max E = n\}| \ =\ F_n,$$ 
where $(F_n)_{n=0}^\infty$ is the Fibonacci sequence defined as: $F_0 = 0, F_1 = 1$ and $F_n = F_{n-1} + F_{n-2}$ for $n\geqslant 2$. Beanland et al. \cite{BCF} generalized the Schreier condition to $q\min E\geqslant p|E|$ and established an inclusion-exclusion type recurrence for the sequence
\begin{equation}\label{e46}m_{p, q, n} \ :=\ |\{E\subset \mathbb{N}\,:\, q\min E \geqslant p|E| \mbox{ and }\max E = n\}|.\end{equation}
In particular, \cite[Theorem 1]{BCF} gives
$$m_{p,q,n}\ =\ \sum_{k=1}^q(-1)^{k+1}\binom{q}{k}m_{p, q, n-k}+ m_{p,q,n-(p+q)},\mbox{ for }n\geqslant p+q.$$
Recently, Beanland et al.\ \cite{BGHH} studied unions of Schreier sets and proved a linear recurrence for their counts using recursively defined characteristic polynomials. Relations between Schreier sets and partitions, compositions, and Tur\'{a}n graphs have also been discovered \cite{BC, C3, CIMSZ}.

Previous work gave a uniform weight to  each number when measuring the size of a set. For example, rewriting $q\min E\geqslant p|E|$ in \eqref{e46} as $\min E\geqslant (p/q)|E|$, one can think of $(p/q)|E|$  as the size of $E$, where each element is given the same weight $p/q$. For our first results, we assign the weight $0$ to exactly one number and the weight $1$ to the other numbers. Specifically, let $\mathcal{N}$ be the collection of finite subsets of $\mathbb{N}$. For $E\in \mathcal{N}$ and $k\in \mathbb{N}$, define  $\omega_k (E) := \sum_{n\in E, n\neq k}1$ and 
$$\mathcal{S}^{(k)} \ :=\ \{E\in \mathcal{N}\,:\, E = \emptyset \mbox{ or }\min E > \omega_k(E)\}.$$
For example, $\mathcal{S}^{(1)}  = \mathcal{S}^{NMAX}\cup \{1\}$, $\mathcal{S}^{(2)}  = \mathcal{S}^{NMAX}\cup \{\{2,n\}: 2 < n\}$, and $\mathcal{S}^{(3)}  = \mathcal{S}^{NMAX}\cup \{\{2,3\}\}\cup \{\{3, n_1, n_2\}: 3 < n_1 < n_2\}$.
Generally, we have the following equality, whose proof is in the appendix
\begin{equation}\label{e1}\mathcal{S}^{(k)} \ =\ \mathcal{S}^{NMAX}\cup \{E\in \mathcal{S}^{MAX}\,:\, k\in E\}.\end{equation}

For $n, k\in \mathbb{N}$, let 
$$\mathcal{A}_{k,n} \ :=\ \{E\in \mathcal{S}^{(k)}\,:\, E = \emptyset \mbox{ or }\max E \leqslant n\}\mbox{ and }a_{k,n} = |\mathcal{A}_{k,n}|.$$ 
We can write
$$\mathcal{A}_{k, n}\ =\ \{E\in \mathcal{N}\,:\, E = \emptyset\mbox{ or } \omega_k(E) < \min E\leqslant \max E\leqslant n\}.$$
The following relates $a_{n,n}$ to the Fibonacci numbers.
\begin{thm}\label{m2}
For $n\in \mathbb{N}$, it holds that
\begin{equation}
\label{eFibn} a_{n,n} \ =\ 2F_n.
\end{equation}
\end{thm}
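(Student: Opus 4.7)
I will use identity \eqref{e1} to split $\mathcal{A}_{n,n}$ into two easily counted pieces and then sum using Bird's theorem and a standard Fibonacci identity. The decomposition $\mathcal{S}^{(n)}= \mathcal{S}^{NMAX}\cup\{E\in\mathcal{S}^{MAX}: n\in E\}$ from \eqref{e1} is in fact a disjoint union: any set lying in both would have to be simultaneously maximal and nonmaximal Schreier, hence empty, yet it would also need to contain $n$, a contradiction. Intersecting with the constraint ``$E=\emptyset$ or $\max E\leqslant n$'' gives
\begin{equation*}
a_{n,n}\;=\;\bigl|\{E\in\mathcal{S}^{NMAX}:E=\emptyset\text{ or }\max E\leqslant n\}\bigr| \;+\; \bigl|\{E\in\mathcal{S}^{MAX}:\max E=n\}\bigr|,
\end{equation*}
where in the second term I used that $n\in E$ together with $\max E\leqslant n$ forces $\max E=n$.

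\textbf{Key lemma.} The heart of the argument is the claim $|\{E\in \mathcal{S}^{NMAX}:\max E=m\}|=F_{m-1}$ for $m\geqslant 1$. I plan to establish this via the shift map $E\mapsto E-1:=\{e-1:e\in E\}$, which I claim is a bijection from nonmaximal Schreier sets with $\max E=m$ to all nonempty Schreier sets with $\max E=m-1$. For well-definedness: if $E$ is nonmaximal Schreier, then $\min E>|E|\geqslant 1$, so $\min E\geqslant 2$, giving $E-1\subset\mathbb{N}$, $\max(E-1)=m-1$, and $\min(E-1)\geqslant|E|=|E-1|$. The inverse $F\mapsto F+1$ sends a Schreier set $F$ with $\max F=m-1$ to a nonmaximal Schreier set with maximum $m$, because $\min(F+1)\geqslant|F|+1>|F|$. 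Applying Bird's theorem to the target side yields $F_{m-1}$, and subtraction gives $|\{E\in\mathcal{S}^{MAX}:\max E=m\}|=F_m-F_{m-1}=F_{m-2}$ for $m\geqslant 2$.

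\textbf{Assembly.} Combining the two counts and using the telescoping identity $\sum_{k=0}^{n-1}F_k=F_{n+1}-1$, I will compute
\begin{equation*}
a_{n,n}\;=\;\Bigl(1+\sum_{m=1}^{n}F_{m-1}\Bigr)+F_{n-2}\;=\;F_{n+1}+F_{n-2}\;=\;F_n+(F_{n-1}+F_{n-2})\;=\;2F_n.
\end{equation*}
The edge case $n=1$ (where $F_{n-2}$ is out of range) will be verified directly: $\mathcal{A}_{1,1}=\{\emptyset,\{1\}\}$ gives $a_{1,1}=2=2F_1$. The main obstacle is establishing the shifting bijection and checking that it respects the (non)maximality conditions precisely; once that lemma is in hand, the conclusion is a routine Fibonacci manipulation.
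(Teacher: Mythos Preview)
Your proof is correct and takes a genuinely different route from either of the paper's two proofs. The paper's first (bijective) proof establishes the recurrence $a_{n+1,n+1}=a_{n,n}+a_{n-1,n-1}$ by building explicit bijections $\Psi_1:\mathcal{A}_{n-1,n-1}\to\mathcal{A}_{n+1,n+1}$ and $\Psi_2:\mathcal{A}_{n,n}\to\mathcal{A}_{n+1,n+1}\setminus\Psi_1(\mathcal{A}_{n-1,n-1})$; the second proof splits on whether $n\in A$, observes that the two cases contribute equally, and evaluates the resulting double binomial sum via \eqref{a1} and \eqref{a2}. Your argument instead leverages the structural decomposition \eqref{e1} directly: you separate $\mathcal{A}_{n,n}$ into nonmaximal Schreier sets with maximum at most $n$ and maximal Schreier sets with maximum exactly $n$, count the first piece by the shift bijection $E\mapsto E-1$ together with Bird's theorem and the telescoping identity $\sum_{k=0}^{n-1}F_k=F_{n+1}-1$, and read off the second piece as $F_n-F_{n-1}=F_{n-2}$. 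This is shorter and arguably more conceptual, since it makes transparent exactly how \eqref{e1} is responsible for the factor of $2$; the trade-off is that it imports Bird's theorem as a black box, whereas the paper's bijective proof is self-contained, and the paper's binomial proof yields an explicit intermediate closed form for $a_{n,n}$.
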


Theorem \ref{m2} is used to establish a general formula for $a_{k,n}$ for each pair $(k,n)\in \mathbb{N}^2$. First, let us record values of $a_{k,n}$ for small $k$ and $n$.

\begin{tabular}{c|cccccccccccccccc}
$k\backslash n$ & $1$ & $2$ & $3$ & $4$ & $5$ & $6$ & $7$ & $8$ & $9$ & $10$ & $11$ & $12$& $13$ & $14$ & $15$ & $16$\\
\hline
$1$ & $2$ & $3$ & $4$ & $6$        & $9$ &    $14$ &   $22$ & $35$ & $56$ & $90$ & $145$ & $234$ & $378$ & $611$ & $988$ & $1598$\\
$2$ & $1$ & $2$ & $4$ & $7$    & $11$ &   $17$&   $26$ & $40$ & $62$ & $97$ & $153$ & $243$ & $388$ & $622$ & $1000$ & $1611$\\
$3$ & $1$ & $2$ & $4$ & $6$   & $10$ &   $17$&   $28$ & $45$ & $71$ & $111$ & $173$ & $270$ & $423$ & $666$ & $1054$ & $1676$\\
$4$ & $1$ & $2$ & $3$ & $6$    & $10$ &   $16$&   $26$ & $43$ & $71$ & $116$ & $187$ & $298$ & $471$ & $741$ & $1164$ & $1830$\\
$5$ & $1$ & $2$ & $3$ & $5$  & $10$ &   $16$&  $26$ & $42$ & $68$ & $111$ & $182$ & $298$ & $485$ & $783$ & $1254$ & $1995$\\
$6$ & $1$ & $2$ & $3$ & $5$  & $8$ &  $16$&  $26$ & $42$ & $68$ & $110$ & $178$ & $289$  & $471$ & $769$ & $1254$ & $2037$\\
$7$ & $1$ & $2$ & $3$ & $5$  & $8$ &  $13$&  $26$ & $42$ & $68$ & $110$ & $178$ & $288$  & $466$ & $755$ & $1226$ & $1995$
\end{tabular}

\begin{center}
Table 1. Initial numbers $a_{k, n}$ for different $k$ and $n$.
\end{center}

Observe that when $k > n$, $a_{k, n}$'s are Fibonacci numbers. This is expected because when $k > n$, $\omega_k(E) = |E|$ for all sets $E\subset \{1, \ldots, n\}$, so the weight $\omega_k$ gives the cardinality of sets as in the result by Bird \cite{Bi}. When $k = n$, we witness the Fibonacci numbers multiplied by $2$, which is Theorem \ref{m2}. When $n > k$, we shall see that $a_{k,n}$'s are found by iterated partial sums. The next theorem summarizes these observations. 

\begin{thm}\label{m4} It holds that
$$a_{k, k+ \ell}\ =\ \begin{cases} F_{\ell+2}+1, &\mbox{ if }k = 1, \ell\geqslant 0,\\
2\sum_{i=0}^{k-2}\binom{\ell}{i}F_{k-i} + 2\binom{\ell}{k-1} + \sum_{j=1}^{\ell}\binom{j}{\ell-j+k}, &\mbox{ if }k\geqslant 2, \ell\geqslant 0,\\ F_{k+\ell+1}, &\mbox{ if }k\geqslant 2, -k < \ell < 0.\end{cases}$$
\end{thm}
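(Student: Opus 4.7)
The plan is to split Theorem~\ref{m4} into its three cases, disposing of the extreme regimes quickly via (\ref{e1}) and Bird's theorem and devoting most of the work to the middle case. For $k=1$ with $\ell\geqslant 0$, identity (\ref{e1}) collapses to $\mathcal{S}^{(1)}=\mathcal{S}^{NMAX}\cup\{\{1\}\}$, so $\mathcal{A}_{1,1+\ell}$ is the disjoint union of $\{\emptyset,\{1\}\}$ with the nonmaximal Schreier sets of maximum at most $1+\ell$. A short calculation gives that the nonmaximal Schreier sets with maximum exactly $m\geqslant 1$ number $F_{m-1}$, and summing yields $F_{\ell+2}-1$, whence $a_{1,1+\ell}=F_{\ell+2}+1$. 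For $k\geqslant 2$ and $-k<\ell<0$, the range $\{1,\dots,k+\ell\}$ excludes $k$, so $\omega_k(E)=|E|$ automatically and $\mathcal{A}_{k,k+\ell}$ is exactly the empty set together with the nonmaximal Schreier sets of maximum at most $k+\ell$; this totals $F_{k+\ell+1}$.

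For the main case $k\geqslant 2$, $\ell\geqslant 0$, the organizing decomposition is $E=E_1\sqcup E_2$ with $E_1\subset\{1,\dots,k\}$ and $E_2\subset\{k+1,\dots,k+\ell\}$. Since every element of $E_2$ exceeds $k$, we have $\omega_k(E)=\omega_k(E_1)+|E_2|$, and when $E_1\ne\emptyset$ also $\min E=\min E_1$. The condition $\omega_k(E)<\min E$ therefore becomes $|E_2|<h(E_1)$, where $h(E_1):=\min E_1-\omega_k(E_1)$; in particular $E_1\in\mathcal{A}_{k,k}$ and $E_2$ may be chosen arbitrarily in $\{k+1,\dots,k+\ell\}$ subject to the size bound. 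When $E_1=\emptyset$, the condition collapses to $E_2$ being empty or nonmaximal Schreier, contributing
\[
Q \;:=\; \sum_{j=0}^{\min(\ell,k)}\binom{\ell}{j}+\sum_{j=k+1}^{\ell}\binom{k+\ell-j}{j}.
\]
Together this yields
\[
a_{k,k+\ell} \;=\; Q \;+\; \sum_{\emptyset\ne E_1\in\mathcal{A}_{k,k}}\sum_{j=0}^{h(E_1)-1}\binom{\ell}{j}.
\]

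The heart of the argument---and the principal obstacle---is a refinement of Theorem~\ref{m2}: writing $c_h:=|\{E_1\in\mathcal{A}_{k,k}:h(E_1)=h\}|$, I claim $c_h=2F_{k-h-1}$ for $1\leqslant h\leqslant k-1$ and $c_k=1$. I would prove this by splitting $\mathcal{A}_{k,k}\setminus\{\emptyset\}$ into the sets containing $k$ and those avoiding $k$, parametrizing each by $(|E_1|,\min E_1)$, and summing the resulting binomial counts via the classical identity $\sum_{t\geqslant 0}\binom{m-1-t}{t}=F_m$; both halves yield exactly $F_{k-h-1}$, producing the factor of $2$, and $\sum_h c_h=2F_k-1$ recovers Theorem~\ref{m2}. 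Swapping the order of summation in the main sum and applying $\sum_{i=0}^m F_i=F_{m+2}-1$ then gives
\[
\sum_{\emptyset\ne E_1}\sum_{j=0}^{h(E_1)-1}\binom{\ell}{j} \;=\; \sum_{j=0}^{k-1}\binom{\ell}{j}\bigl(2F_{k-j}-1\bigr) \;=\; 2\sum_{j=0}^{k-2}\binom{\ell}{j}F_{k-j}+\binom{\ell}{k-1}-\sum_{j=0}^{k-2}\binom{\ell}{j}.
\]
Adding $Q$ and matching to the target formula reduces to showing
\[
Q-\sum_{j=0}^{k-2}\binom{\ell}{j} \;=\; \binom{\ell}{k-1}+\sum_{j=1}^{\ell}\binom{j}{\ell-j+k}.
\]
For $\ell<k$ both sides vanish past a single boundary term, and for $\ell\geqslant k$ the reindexing $j\mapsto k+\ell-j$ identifies $\sum_{j=k+1}^{\ell-1}\binom{k+\ell-j}{j}$ with $\sum_{j=k+1}^{\ell-1}\binom{j}{\ell-j+k}$, while the leftover $\binom{\ell}{k}$ matches the $j=\ell$ term (and $\binom{j}{\ell-j+k}=0$ for $j\leqslant k$ because $\ell-j+k>j$).
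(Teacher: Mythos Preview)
Your argument is correct and takes a genuinely different route from the paper. For the two easy regimes ($k=1$ and $\ell<0$) you and the paper do essentially the same thing. For the main case $k\geqslant 2$, $\ell\geqslant 0$, however, the paper proceeds \emph{recursively}: it proves the relation $a_{k,n}=a_{k,n-1}+a_{k-1,n-2}$ by a bijection, interprets the array $(a_{k,k+\ell})_{k,\ell}$ as an iterated partial-sum triangle with diagonal seeds $a_{k,k}=2F_k$, and then invokes three auxiliary lemmas on partial sums together with the closed form $P^{(k)}((F_n))(\ell)=\sum_j\binom{j}{\ell-1-j+k}$ cited from~\cite{C2}. You instead work \emph{statically}, slicing each $E$ as $E_1\sqcup E_2$ with $E_1\subset\{1,\dots,k\}$ and reducing everything to the distribution of the ``slack'' $h(E_1)=\min E_1-\omega_k(E_1)$ over $\mathcal{A}_{k,k}$. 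Your key lemma $c_h=2F_{k-h-1}$ (for $1\leqslant h\leqslant k-1$) is a strict refinement of Theorem~\ref{m2}, proved by the same containing-$k$/avoiding-$k$ dichotomy that drives the paper's second proof of that theorem, and it makes the rest a short binomial computation with no external input. The paper's approach exposes the recursive structure and ties into the partial-sum literature; yours is more self-contained and yields the level-set count $c_h$ as a bonus. One cosmetic point: in the $k=1$ case your ``disjoint union of $\{\emptyset,\{1\}\}$ with the nonmaximal Schreier sets'' double-counts $\emptyset$ (which is vacuously nonmaximal), and in the final reindexing step the boundary $\ell=k$ deserves a separate sentence since $\binom{k}{\ell-k+k}=\binom{k}{\ell}$ does not vanish there---but the arithmetic still closes up in both places.
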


\begin{exa}\normalfont
Let $k = 4$ and $\ell = 6$. Table 1 gives us $a_{4, 10} = 116$, while Theorem \ref{m4} gives
\begin{align*}&2\sum_{i=0}^{2}\binom{6}{i}F_{4-i} + 2\binom{6}{3} + \sum_{j=1}^{6}\binom{j}{10-j}\\
&\ =\ 2\left(\binom{6}{0}F_4+\binom{6}{1}F_3+\binom{6}{2}F_2\right) + 2\binom{6}{3} + \binom{5}{5} + \binom{6}{4}\ =\ 116, 
\end{align*}
as expected. 
\end{exa}

As a corollary of Theorem \ref{m4}, we obtain a more general identity than the one in Theorem \ref{m2}.

\begin{thm}\label{c1}
For $\ell\geqslant 0$ and $k\geqslant \ell+2$, we have
$$a_{k, k+\ell}\ =\ 2F_{k+\ell}.$$
\end{thm}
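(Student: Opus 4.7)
The plan is to derive Theorem~\ref{c1} directly from the middle case of Theorem~\ref{m4}, observing that in the range $\ell\geqslant 0$, $k\geqslant \ell+2$, two of the three terms in the formula vanish. First, $2\binom{\ell}{k-1}=0$ because $k-1\geqslant \ell+1>\ell$. Second, for the tail sum $\sum_{j=1}^{\ell}\binom{j}{\ell-j+k}$, each $j\in\{1,\ldots,\ell\}$ satisfies
$$\ell-j+k \ \geqslant\ \ell - j + (\ell+2)\ =\ 2\ell+2-j \ >\ j$$
(since $j\leqslant \ell$), so the binomial vanishes term by term. After this cleanup, and using $\binom{\ell}{i}=0$ for $i>\ell$ (together with $\ell\leqslant k-2$), the formula collapses to
$$a_{k,k+\ell}\ =\ 2\sum_{i=0}^{\ell}\binom{\ell}{i}F_{k-i}.$$

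It therefore suffices to prove the classical Fibonacci--binomial identity
$$\sum_{i=0}^{\ell}\binom{\ell}{i}F_{k-i}\ =\ F_{k+\ell}\qquad (\ell\geqslant 0,\ k\geqslant \ell+2).$$
I would prove this by induction on $\ell$. The base case $\ell=0$ is immediate. For the inductive step, apply Pascal's rule $\binom{\ell+1}{i}=\binom{\ell}{i}+\binom{\ell}{i-1}$ and split the sum in two. Reindexing the second piece by $j=i-1$ gives
$$\sum_{i=0}^{\ell+1}\binom{\ell+1}{i}F_{k-i}\ =\ \sum_{i=0}^{\ell}\binom{\ell}{i}F_{k-i}\ +\ \sum_{j=0}^{\ell}\binom{\ell}{j}F_{(k-1)-j}.$$
Both sums lie in the range where the inductive hypothesis applies (the first with parameter $k$, the second with parameter $k-1$, both $\geqslant \ell+1$), so they evaluate to $F_{k+\ell}$ and $F_{k+\ell-1}$, and the Fibonacci recurrence delivers $F_{k+\ell+1}$.

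The main obstacle is essentially administrative: keeping track of index bounds so that both binomial-vanishing steps apply and so that the inductive hypothesis can be invoked at the shifted parameter $k-1$. The hypothesis $k\geqslant \ell+2$ is exactly strong enough to support both, which is why this is the precise range in which $a_{k,k+\ell}$ reduces to $2F_{k+\ell}$ (outside this range, the tail sum $\sum_j\binom{j}{\ell-j+k}$ stops vanishing, producing the deviations visible in Table~1).
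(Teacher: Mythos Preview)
Your proposal is correct and follows essentially the same approach as the paper: reduce the middle case of Theorem~\ref{m4} to the binomial--Fibonacci identity $\sum_{i=0}^{\ell}\binom{\ell}{i}F_{k-i}=F_{k+\ell}$ (you spell out the vanishing of the other two terms, which the paper leaves implicit), then prove that identity by induction on $\ell$ via Pascal's rule. One small fix: in the inductive step you need the parameters $k$ and $k-1$ to be $\geqslant \ell+2$, not $\ell+1$, to invoke the hypothesis---and indeed they are, since the $(\ell+1)$-case assumes $k\geqslant \ell+3$.
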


For our last result, we put the zero weight on both $2$ and $3$ and again observe the appearance of the Fibonacci numbers. 
For $E\in\mathcal{N}$, define $\omega_{2,3}(E) = \sum_{n\in E\backslash\{2, 3\}} 1$. 
\begin{thm}\label{m3}
For $n\in \mathbb{N}$, let 
$$\mathcal{K}_n\ :=\ \{E\in \mathcal{N}\,:\, \max  E = n, \min E > \omega_{2,3}(E), \mbox{ and }|E|\neq 2\}.$$
Then $|\mathcal{K}_{n+1}| = F_{n}$ for all $n\in \mathbb{N}$.
\end{thm}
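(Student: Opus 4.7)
The plan is to construct an explicit bijection between $\mathcal{K}_{n+1}$ and the family $\{E \in \mathcal{S} : \max E = n\}$ of Schreier sets with maximum $n$, which has cardinality $F_n$ by Bird's theorem.

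To begin, I would observe that every $E \in \mathcal{K}_{n+1}$ satisfies $\min E \geq 2$: if $1 \in E$, then $\omega_{2,3}(E) \geq 1 = \min E$, violating the strict inequality. Stratifying by $\min E$ gives three classes. In class (a), $\min E \geq 4$, so $2, 3 \notin E$ and the inequality collapses to $\min E > |E|$ (i.e., $E$ is nonmaximal Schreier); together with $|E| \neq 2$ and $\max E = n+1$, this leaves the singleton $\{n+1\}$ (for $n+1 \geq 4$) and the nonmaximal Schreier sets of size $m \geq 3$ with $\min \geq 4$. In class (b), $\min E = 3$ gives $\omega_{2,3}(E) = |E|-1$, so the inequality becomes $|E| \leq 3$; with $|E| \neq 2$, the only options are $E = \{3, k, n+1\}$ for some $k \in \{4, \ldots, n\}$ (or the degenerate $\{3\}$ when $n+1=3$). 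In class (c), $\min E = 2$ forces $|E \setminus \{2, 3\}| \leq 1$, leaving only $E = \{2, 3, n+1\}$ (or $\{2\}$ when $n+1 = 2$).

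Next I would define $\phi$ piecewise: set $\phi(E) = \{e - 1 : e \in E\}$ whenever $\min E \geq 4$ or $E$ is the singleton $\{n+1\}$; set $\phi(\{3, k, n+1\}) = \{k - 1, n\}$; and set $\phi(\{2, 3, n+1\}) = \{2, n\}$. A direct check shows that $\phi(E)$ is Schreier with maximum $n$ in every case: for class (a) with $|E| = m \geq 3$, nonmaximality gives $\min E \geq m + 1$, so $\min \phi(E) = \min E - 1 \geq m = |\phi(E)|$; the other images are size-$2$ Schreier sets with $\min \geq 2$, or the singleton $\{n\}$. For the inverse, I would take $F \in \mathcal{S}$ with $\max F = n$ and set $\phi^{-1}(\{n\}) = \{n+1\}$, $\phi^{-1}(\{2, n\}) = \{2, 3, n+1\}$, $\phi^{-1}(\{k', n\}) = \{3, k'+1, n+1\}$ for $k' \geq 3$, and $\phi^{-1}(F) = \{e + 1 : e \in F\}$ whenever $|F| \geq 3$; the Schreier property of $F$ (together with $|F| \geq 3$) forces $\min F + 1 \geq |F| + 1 > |F|$ and $\min F + 1 \geq 4$, so the lift lies in class (a), and the size-$2$ lifts lie in classes (b) or (c) by direct inspection.

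The main obstacle is the careful case analysis needed to confirm that the three classes partition $\mathcal{K}_{n+1}$ exhaustively and that the inverse map genuinely lands back inside $\mathcal{K}_{n+1}$; once those verifications are in place, the conclusion $|\mathcal{K}_{n+1}| = F_n$ follows immediately from Bird's result. The boundary values $n + 1 \in \{2, 3, 4\}$ can be checked by hand and fit the same pattern via the degenerate singletons.
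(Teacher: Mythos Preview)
Your proof is correct and takes a genuinely different route from either of the paper's two arguments. The paper's first proof establishes the Fibonacci recurrence $|\mathcal{K}_{n+1}| = |\mathcal{K}_n| + |\mathcal{K}_{n-1}|$ by building an injection $f:\mathcal{K}_n\to\mathcal{K}_{n+1}$ (the shift $F\mapsto F+1$) together with a somewhat intricate four-case bijection $g:\mathcal{K}_{n-1}\to\mathcal{K}_{n+1}\setminus f(\mathcal{K}_n)$; the paper's second proof does a case split on membership of $2$ and $3$ and reduces the count in the ``$2,3\notin E$'' case to a double binomial sum evaluated via the hockey-stick and diagonal-Fibonacci identities. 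You instead set up a single bijection $\phi$ directly between $\mathcal{K}_{n+1}$ and $\{E\in\mathcal{S}:\max E=n\}$, then invoke Bird's theorem. Your stratification by $\min E$ mirrors the paper's second proof, but rather than counting each stratum you notice that the three classes match up with Schreier sets of size $\geqslant 3$, size $2$, and size $1$ respectively via an explicit shift/relabel. The net effect is a shorter argument: you avoid both the delicate surjectivity check for the map $g$ in the paper's first proof and the binomial manipulations in the second. The trade-off is that your proof imports Bird's result as a black box, whereas the paper's recursive proof is self-contained and exhibits the Fibonacci structure of $(\mathcal{K}_n)$ internally. Your boundary cases ($n+1\in\{2,3\}$, where the singletons $\{2\}$ and $\{3\}$ play the role of the degenerate classes) are handled correctly by the ``or $E$ is the singleton $\{n+1\}$'' clause in the definition of $\phi$.
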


Our paper is structured as follows: Section \ref{nn2F_n} proves Theorem \ref{m2}, while Section \ref{table} uses Theorem \ref{m2} and properties of iterated partial sums to prove Theorem \ref{m4}; finally, Section \ref{KnFn} proves Theorem \ref{m3}.

\section{The sequence $(a_{n,n})_{n\geqslant 1}$} \label{nn2F_n}

We shall present two proofs of Theorem \ref{m2}, the first of which employs bijective maps, while the second utilizes the two well-known formulas:
\begin{align}
    \label{a1}\sum_{i=m}^{n}\binom{i}{m}&\ =\ \binom{n+1}{m+1}, \mbox{ for }n, m\in \mathbb{N}\mbox{, and }\\
    \label{a2}\sum_{k=0}^{\lfloor n/2\rfloor}\binom{n-k}{k}&\ =\ F_{n+1}, \mbox{ for }n\geqslant 0.
\end{align}

\begin{proof}[Bijective proof of Theorem \ref{m2}]
Since $a_{1,1} = a_{2,2} = 2$, it suffices to show that $a_{n+1,n+1} = a_{n,n} + a_{n-1,n-1}$ for $n\geqslant 2$. 
Consider the following map $\Psi_1: \mathcal{A}_{n-1,n-1}\rightarrow \mathcal{A}_{n+1,n+1}$ defined as
$$\Psi_1(F) \ =\ (F+1)\cup \{n+1\}.$$
Then $\Psi_1$ is well-defined because $\Psi_1(\emptyset) = \{n+1\}\in \mathcal{A}_{n+1, n+1}$, and if $F\neq \emptyset$, 
\begin{equation}\label{e20}\min \Psi_1(F)\ =\ \min F + 1 \ \geqslant\ |F| + 1\ =\ |\Psi_1(F)|\ >\ |\Psi_1(F)| - 1 \ =\ \omega_{n+1}(\Psi_1(F)).\end{equation}
Furthermore, $\Psi_1(F)$ is clearly one-to-one. 

Define the following map $\Psi_2: \mathcal{A}_{n,n}\rightarrow \mathcal{A}_{n+1, n+1}\backslash \Psi_1(\mathcal{A}_{n-1,n-1})$ as 
$$\Psi_2(F)\ =\ \begin{cases}F,&\mbox{ if } F\in \mathcal{S}^{NMAX};\\(F\backslash \{n\})\cup \{n+1\},&\mbox{ if }F\in \mathcal{S}^{MAX}\backslash \{\emptyset\}.\end{cases}$$
We show that $\Psi_2$ is well-defined. Obviously, $\Psi_2(\mathcal{A}_{n,n})\subset \mathcal{A}_{n+1, n+1}$. Let us show that $\Psi_2(\mathcal{A}_{n,n})\cap \Psi_1(\mathcal{A}_{n-1, n-1}) = \emptyset$. Pick $F\in \mathcal{A}_{n,n}$.

Case 1: If $F\in \mathcal{S}^{NMAX}$, then $\Psi_2(F) = F$ and $n+1\notin \Psi_2(F)$, so $\Psi_2(F)\notin \Psi_1(\mathcal{A}_{n-1,n-1})$. 

Case 2: If $F\in \mathcal{S}^{MAX}\backslash \{\emptyset\}$, then \eqref{e1} implies that  $n\in F$. That $F$ contains $n\geqslant 2$ and $F\in \mathcal{S}^{MAX}$ imply that $|F|\geqslant 2$. 
Therefore, 
\begin{equation}\label{e30}|\Psi_2(F)| \ =\ |F| \ =\ \min F\ =\ \min \Psi_2(F).\end{equation}
Suppose, for a contradiction, that there exists $G\in \mathcal{A}_{n-1,n-1}$ such that $\Psi_1(G) = \Psi_2(F)$. By \eqref{e30}, $\min \Psi_1(G) = |\Psi_1(G)|$.  It follows from \eqref{e20} that $\min G = |G|$. By \eqref{e1}, we must have $n-1\in G$. Hence, $n\in \Psi_1(G)$, contradicting that $n\notin \Psi_2(F)$. We have shown that $\Psi_2(\mathcal{A}_{n,n})\cap \Psi_1(\mathcal{A}_{n-1, n-1}) = \emptyset$. 

Furthermore, $\Psi_2$ is one-to-one. Indeed, suppose that $\Psi_2(F_1) = \Psi_2(F_2)$. If $n+1\notin \Psi_2(F_1) = \Psi_2(F_2)$, then by the definition of $\Psi_2$, we have
$$F_1 \ =\ \Psi_2(F_1)\ =\ \Psi_2(F_2) \ =\ F_2.$$
If $n+1\in \Psi_2(F_1) = \Psi_2(F_2)$, then $F_1, F_2\in \mathcal{S}^{MAX}\backslash \{\emptyset\}$, so \eqref{e1} implies that $n\in F_1\cap F_2$. Therefore, $\Psi_2(F_1) = \Psi_2(F_2)$ guarantees that $F_1 = F_2$.

Finally, we show that $\Psi_2$ is surjective. Take a nonempty $F\in \mathcal{A}_{n+1, n+1}\backslash \Psi_1(\mathcal{A}_{n-1,n-1})$.

Case 1: $F\in \mathcal{S}^{NMAX}$. Then $n+1\notin F$ because otherwise, $\Psi_1(F\backslash \{n+1\}-1) = F$. 
By \eqref{e1}, $F\in \mathcal{S}^{NMAX}$, so $\Psi_2(F) = F$. 

Case 2: $F\in \mathcal{S}^{MAX}$. Then $n+1\in F$. Let $E = (F\backslash \{n+1\})\cup \{n\}$. It follows that $|F|\geqslant |E|$. Note that $F\notin \Psi_1(\mathcal{A}_{n-1,n-1})$ implies that $F\neq \{n+1\}$. Hence, $|F|\geqslant 2$ and so, 
$$\min E \ =\ \min F\ >\ |F|-1\ \geqslant\ |E|-1 \ =\ \omega_n(E).$$
Therefore, $E\in \mathcal{A}_{n,n}$. To show that $\Psi_2(E) = F$, it remains to verify that $n\notin F$. Suppose otherwise, i.e., $n\in F$. Let $H = F\backslash \{n+1\}-1$. We have
$$\min H \ =\ \min F - 1 \ >\ \omega_{n+1}(F) - 1 \ =\ |F| - 2 \ =\ |H| - 1\ =\ \omega_{n-1}(H).$$
Hence, $F =  \Psi_1(H)\in \Psi(\mathcal{A}_{n-1,n-1})$, a contradiction. This completes our proof. 
\end{proof}

\begin{proof}[Alternative proof of Theorem \ref{m2}]
For $n\in \mathbb{N}$, we build a set $A\subset \{1, \ldots, n\}$ with $\min A > \omega_n(A)$. 

If $n\notin A$, then $\omega_n(A) = |A|$. Set $k = \min A\in \{1, \ldots, n-1\}$. For each such $k$, the set $A\backslash \{k\}\subset \{k+1, k+2, \ldots, n-1\}$. Since $k > |A|$, $|A\backslash \{k\}|\leqslant k-2$. Hence, the number of sets $A\in\mathcal{A}_{n, n}$ with $n\notin A$ is 
$$c = \sum_{k=1}^{n-1}\sum_{j=0}^{k-2}\binom{n-k-1}{j}.$$

If $n\in A$, then $\omega_n(A) = |A|-1$. Set $k = \min A\in \{1, \ldots, n\}$. For each $k\neq n$, the set $A\backslash \{k, n\}$ is a subset of $\{k+1, k+2, \ldots, n-1\}$ and has size 
$$|A\backslash \{k, n\}|\ = \ |A|-2\ =\ (|A|-1)-1\ <\ k-1.$$ Hence, the number of sets $A\in \mathcal{A}_{n, n}$ with $n\in A$ and $A\neq \{n\}$ is also $c$. 

Including also the empty set and $\{n\}$, we obtain 
$$a_{n,n} \ =\ 2 + 2\sum_{k=1}^{n-1}\sum_{j=0}^{k-2}\binom{n-k-1}{j}.$$
It suffices to show that 
$$\sum_{k=1}^{n-1}\sum_{j=0}^{k-2}\binom{n-k-1}{j}\ =\ F_n-1.$$
Exchanging order of the double sum then use \eqref{a1} and \eqref{a2}, we have
\begin{align*}
\sum_{k=1}^{n-1}\sum_{j=0}^{k-2}\binom{n-k-1}{j}&\ =\ \sum_{j=0}^{n-3}\sum_{k=j+2}^{n-1}\binom{n-k-1}{j}\ =\ \sum_{j=0}^{\lfloor (n-3)/2\rfloor}\sum_{k=j}^{n-3-j}\binom{k}{j}\\
&\ =\ \sum_{j=0}^{\lfloor (n-3)/2\rfloor}\binom{n-2-j}{j+1}\ =\ \sum_{j=1}^{\lfloor (n-1)/2\rfloor}\binom{n-1-j}{j}\\
&\ =\ F_n - 1, 
\end{align*}
as desired. 
\end{proof}

\section{The table $(a_{k,n})_{k,n\geqslant 1}$}\label{table}

The goal of this section is to prove Theorem \ref{m4}, giving formulas to compute $a_{k,n}$ for any $(k,n)\in \mathbb{N}^2$. We start with an easy observation. 

\begin{prop}\label{kp1}
For $k > n\in \mathbb{N}$, we have
$$a_{k,n} \ =\ F_{n+1}, \mbox{ for }k > n.$$
\end{prop}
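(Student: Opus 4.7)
The plan is to reduce Proposition \ref{kp1} to Bird's original result on Schreier sets by exploiting the fact that, when $k$ exceeds every element of $E$, the weight $\omega_k(E)$ collapses to the ordinary cardinality $|E|$.

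First, I would observe that for $k > n$ and any $E \subseteq \{1,\ldots,n\}$, we have $k \notin E$, hence $\omega_k(E) = |E|$. The defining inequality $\omega_k(E) < \min E$ thus becomes $\min E > |E|$, which is exactly the nonmaximal Schreier condition. Equivalently (and as already visible from \eqref{e1}),
$$\mathcal{A}_{k,n} \ =\ \{\emptyset\} \cup \{E \in \mathcal{S}^{NMAX}\setminus\{\emptyset\} \,:\, \max E \leqslant n\}.$$

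The second step is a shift bijection $E \mapsto E - 1$ from nonempty nonmaximal Schreier sets with $\max E \leqslant n$ onto nonempty Schreier sets with maximum at most $n-1$. Because $\min E > |E| \geqslant 1$ forces $1 \notin E$, the translate $E - 1$ is a well-defined nonempty subset of $\{1,\ldots,n-1\}$, and $\min(E-1) = \min E - 1 \geqslant |E| = |E-1|$ makes it Schreier. The inverse $F \mapsto F + 1$ is verified the same way, and the bijection preserves the bound on the maximum.

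Finally, Bird's identity $|\{E \in \mathcal{S} : \max E = m\}| = F_m$ combined with the classical sum $\sum_{m=1}^{N} F_m = F_{N+2} - 1$ gives
$$a_{k,n} \ =\ 1 + \sum_{m=1}^{n-1} F_m \ =\ 1 + (F_{n+1}-1) \ =\ F_{n+1}.$$
I do not foresee any real obstacle here; the whole argument is a bookkeeping reduction to Bird's theorem, with the only minor care being to count the empty set exactly once.
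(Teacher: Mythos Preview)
Your argument is correct. The first reduction---that $k>n$ forces $k\notin E$ and hence $\omega_k(E)=|E|$, so that $\mathcal{A}_{k,n}$ is exactly the family of nonmaximal Schreier sets contained in $\{1,\ldots,n\}$ together with $\emptyset$---is identical to the paper's. The divergence is only in how the final count is obtained: the paper simply quotes \cite[Theorem~1]{C1}, which already records $|\{E\subset\{1,\ldots,n\}:E=\emptyset\text{ or }\min E>|E|\}|=F_{n+1}$, whereas you instead rederive this by the shift $E\mapsto E-1$ to Schreier sets with maximum at most $n-1$, then invoke Bird's identity and the partial-sum formula $\sum_{m=1}^{n-1}F_m=F_{n+1}-1$. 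Your route is slightly longer but more self-contained, trading one external citation for a more elementary one (Bird) plus a classical Fibonacci identity; the paper's route is a one-line appeal to a result that packages exactly this computation.
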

\begin{proof}
For $k > n$, we have 
\begin{align*}a_{k, n} \ =\ |\mathcal{A}_{k, n}| &\ =\ |\{E\in \mathcal{S}^{(k)}\,:\, E = \emptyset \mbox{ or }\max E \leqslant n\}|\\
&\ =\ |\{E\subset \{1, 2,\ldots, n\}\,:\, E = \emptyset\mbox{ or } \min E > \omega_k(E)\}|\\
&\ =\  |\{E\subset \{1, 2,\ldots, n\}\,:\, E = \emptyset\mbox{ or } \min E > |E|\}|\\
&\ =\ F_{n+1},
\end{align*}
where the last equality follows from \cite[Theorem 1]{C1}.
\end{proof}

\begin{prop}
For $n > \max\{k, 2\}$, we have
\begin{equation}\label{e10}a_{k, n} \ =\ a_{k, n-1}+ a_{k-1, n-2}.\end{equation}
\end{prop}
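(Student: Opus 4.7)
The plan is to partition $\mathcal{A}_{k,n}$ according to whether $n\in E$ and biject the two pieces with $\mathcal{A}_{k,n-1}$ and $\mathcal{A}_{k-1,n-2}$, respectively. The first half is immediate: any $E\in \mathcal{A}_{k,n}$ with $n\notin E$ automatically satisfies $\max E\leqslant n-1$, so the identity map identifies $\{E\in \mathcal{A}_{k,n}\,:\,n\notin E\}$ with $\mathcal{A}_{k,n-1}$, contributing $a_{k,n-1}$.

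For the second half I would introduce the shift map $\Phi: \{E\in \mathcal{A}_{k,n}\,:\,n\in E\}\to \mathcal{A}_{k-1,n-2}$ defined by $\Phi(E)=\{e-1\,:\,e\in E\setminus\{n\}\}$, with proposed inverse $\Psi(F)=(F+1)\cup\{n\}$. The key accounting uses the hypothesis $n>k$, so $n\neq k$: an element $e\neq n$ of $E$ equals $k$ if and only if $e-1$ equals $k-1$ in $\Phi(E)$, whence
$$\omega_{k-1}(\Phi(E))\ =\ |E|-1-[k\in E]\ =\ \omega_k(E)-1.$$
Since also $\min \Phi(E)=\min E - 1$ when $E\supsetneq\{n\}$ and $\max \Phi(E)\leqslant n-2$, subtracting $1$ from both sides of $\omega_k(E)<\min E$ yields the equivalent inequality $\omega_{k-1}(\Phi(E))<\min \Phi(E)$, so $\Phi(E)\in \mathcal{A}_{k-1,n-2}$. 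The edge case $E=\{n\}$ maps to $\emptyset\in \mathcal{A}_{k-1,n-2}$, and this is where the hypothesis $n>2$ is used, to guarantee $n-2\geqslant 1$. A symmetric verification shows $\Psi$ is well-defined and two-sided inverse to $\Phi$, from which \eqref{e10} follows by adding the two counts.

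I expect the only subtlety to be the bookkeeping for $\omega$: one must notice that shifting every element down by $1$ converts the exempt index $k$ into $k-1$, which is precisely why the second term of the recurrence carries subscript $k-1$ rather than $k$. Beyond that observation, the remaining verifications reduce to routine case-checks analogous to those appearing in the bijective proof of Theorem~\ref{m2}.
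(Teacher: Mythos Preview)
Your proposal is correct and matches the paper's proof essentially verbatim: the paper also splits $\mathcal{A}_{k,n}$ into $\mathcal{A}_{k,n-1}$ and $\mathcal{A}_{k,n}\setminus\mathcal{A}_{k,n-1}$ and uses the same shift bijection $\Psi(F)=(F+1)\cup\{n\}$ (you simply describe its inverse $\Phi$ first). Your explicit remark that the hypothesis $n>k$ is what makes the exempt index shift from $k$ to $k-1$ is a nice touch that the paper leaves implicit.
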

\begin{proof}
It is obvious from the definition of $\mathcal{A}_{k, n}$ that $\mathcal{A}_{k, n-1}\subset \mathcal{A}_{k,n}$. 
It suffices to show that there is a bijective map between $\mathcal{A}_{k,n}\backslash \mathcal{A}_{k,n-1}$ and $\mathcal{A}_{k-1, n-2}$. To do so, we define $\Psi: \mathcal{A}_{k-1, n-2}\rightarrow \mathcal{A}_{k,n}\backslash \mathcal{A}_{k,n-1}$ as
$$\Psi(F) \ :=\ (F+1)\cup \{n\}.$$

We check that $\Psi$ is well-defined. Let $F \in \mathcal{A}_{k-1,n-2}$ and $E=\Psi (F)$. If $F=\emptyset$, then $E=\{n\}\in\mathcal{A}_{k,n}\backslash\mathcal{A}_{k,n-1}$. If $F\neq\emptyset$, we have 
$\max E = n$ and $$\min E \ =\ \min F + 1 \ >\ \omega_{k-1}(F) + 1 \ =\ \omega_k(F+1)+1 \ =\ \omega_k((F+1)\cup\{n\}) \ =\  \omega_k(E).$$ 
Thus, $\Psi$ is well-defined. 

Clearly, $\Psi$ is injective. Let us verify that $\Psi$ is surjective. Pick $E\in \mathcal{A}_{k,n}\backslash \mathcal{A}_{k,n-1}$. Then $n\in E$. Let $F=E\backslash\{n\}-1$. We claim that $F\in\mathcal{A}_{k-1,n-2}$. Indeed, if $E=\{n\}$, then $F = \emptyset$. If $\{n\}\subsetneq E$, then 
$$\max F \ =\ \max E\backslash\{n\} - 1 \ \leqslant\ n-1-1\ =\ n-2.$$
Furthermore, 
$$\min F \ =\ \min E-1\ >\ \omega_k(E)-1\ =\ \omega_k((F+1)\cup\{n\})-1\ =\ \omega_k(F+1)\ =\ \omega_{k-1}(F).$$ This completes the proof.
\end{proof}

Next, we collect useful properties of iterated partial sums of a sequence. Given a sequence $(a_n)_{n=0}^\infty$ and a number $b$, the partial sum operator $P_b$ produces the sequence $(a_n')_{n=0}^\infty$ defined as follows:
$$a_{0}' \ =\ b, \quad a'_{1}\ =\ b+a_0, \quad a'_{2} \ =\ b+a_0 + a_1, \quad \ldots.$$
In general, 
$$P_b((a_n)_{n=0}^\infty) \ :=\ \left(a'_n := b+\sum_{i=0}^{n-1} a_i\right)_{n\geqslant 0}.$$
Fix a sequence $\mathbf s = (b_n)_{n=0}^\infty$ and let
$(t^{\mathbf s}_{k,n})_{n=0}^\infty$, for $k\geqslant 0$, denote the sequence
$$P_{b_{k-1}}(\cdots P_{b_1}(P_{b_0}((a_n)_{n=0}^\infty))).$$
Here $(t^\mathbf{s}_{0, n})_{n=0}^\infty = (a_n)_{n=0}^\infty$. 
In the special case that $\mathbf s$ consists of only $0$, the sequence 
$(t^{\mathbf s}_{k,n})_{n\geqslant 0}$ is what known as \textit{the $k$-partial sum of $(a_n)_{n\geqslant 0}$}, which shall be denoted by $P^{(k)}((a_n)_{n\geqslant 0})$, and its $m$th term is denoted by $P^{(k)}((a_n)_{n\geqslant 0})(m)$. As a result, $$P^{(k)}((a_n)_{n\geqslant 0})(m) \ =\ t^{\mathbf s}_{k,m}, \mbox{ for all }m\geqslant 0.$$

Let $\mathbf s_1$ consist of only $0$ and $\mathbf s_2 = (b_n)_{n\geqslant 0}$. Our following lemma compares, for each $k\in \mathbb{N}$, the two sequences $(t^{\mathbf s_1}_{k,n})_{n\geqslant 0}$ and $(t^{\mathbf s_2}_{k,n})_{n\geqslant 0}$. The proof of the lemma is standard, involving double induction. 

\begin{lem}\label{kl1}
Let $k\geqslant 0$ and let $\mathbf s_1$ consist of only $0$ and $\mathbf s_2 = (b_n)_{n\geqslant 0}$. Then 
\begin{equation}\label{ep1}
t^{\mathbf s_2}_{k,n} - t^{\mathbf s_1}_{k,n}\ =\ \sum_{i=0}^{k-1}\binom{n}{i}b_{k-1-i}, \mbox{ for all }n\geqslant 0.
\end{equation}
\end{lem}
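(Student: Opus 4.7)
The plan is to isolate the difference $d_{k,n} := t^{\mathbf{s}_2}_{k,n} - t^{\mathbf{s}_1}_{k,n}$ and prove the formula by a single induction on $k$. From the definition of the partial sum operator $P_b$, one has $t^{\mathbf{s}_2}_{k+1,n} = b_k + \sum_{i=0}^{n-1} t^{\mathbf{s}_2}_{k,i}$ and $t^{\mathbf{s}_1}_{k+1,n} = \sum_{i=0}^{n-1} t^{\mathbf{s}_1}_{k,i}$. Subtracting these yields the clean recurrence
$$d_{k+1,n} \;=\; b_k + \sum_{i=0}^{n-1} d_{k,i}, \qquad d_{0,n} = 0 \text{ for all } n\geqslant 0.$$
Thus the lemma is equivalent to showing $d_{k,n} = \sum_{i=0}^{k-1}\binom{n}{i}b_{k-1-i}$, which I would establish by induction on $k$ alone.

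The base case $k=0$ is vacuous, since the sum in \eqref{ep1} is empty and $d_{0,n}=0$. For the inductive step, I would substitute the inductive hypothesis $d_{k,i} = \sum_{j=0}^{k-1}\binom{i}{j}b_{k-1-j}$ into the recurrence, swap the order of summation, and apply \eqref{a1} to rewrite the inner sum as $\sum_{i=0}^{n-1}\binom{i}{j} = \binom{n}{j+1}$. After reindexing $j\mapsto j+1$, the standalone term $b_k$ is absorbed as the $j=0$ contribution $\binom{n}{0}b_k$, and the result collapses to
$$d_{k+1,n} \;=\; \sum_{j=0}^{k}\binom{n}{j}b_{k-j},$$
which is the desired formula with $k$ replaced by $k+1$.

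The only mildly delicate point is bookkeeping: \eqref{a1} is stated with lower summation limit $j$, while our sum starts at $i=0$. This is harmless because $\binom{i}{j}=0$ whenever $i<j$, but it should be noted so that the hockey-stick step is rigorously justified. Contrary to the description as involving double induction, a single induction on $k$ combined with one application of \eqref{a1} is enough once the difference sequence $d_{k,n}$ is isolated; the base sequence $(a_n)$ never appears after the subtraction, so no induction in $n$ is needed.
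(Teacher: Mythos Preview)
Your argument is correct and follows a genuinely different line from the paper's. The paper performs a double induction: an outer induction on $k$ and, within the step $k=j\to j{+}1$, an inner induction on $n$, using the one-term recurrence $t^{\mathbf s}_{j+1,\ell+1}=t^{\mathbf s}_{j+1,\ell}+t^{\mathbf s}_{j,\ell}$ together with Pascal's rule $\binom{\ell}{i-1}+\binom{\ell}{i}=\binom{\ell+1}{i}$. You instead go back to the defining formula $t^{\mathbf s}_{k+1,n}=b_k+\sum_{i=0}^{n-1}t^{\mathbf s}_{k,i}$, subtract to obtain the closed recurrence $d_{k+1,n}=b_k+\sum_{i=0}^{n-1}d_{k,i}$ for the difference, and then evaluate $\sum_{i=0}^{n-1}\binom{i}{j}=\binom{n}{j+1}$ in one stroke via \eqref{a1}. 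This collapses the inner induction on $n$ into a single application of the hockey-stick identity, so only the induction on $k$ remains. Your remark that $\binom{i}{j}=0$ for $i<j$ handles the lower limit discrepancy with \eqref{a1}, and the edge case $n\leqslant j$ (where both sides of the summed identity vanish) is also fine. The net effect is a shorter proof at the cost of invoking \eqref{a1}; the paper's version is more self-contained but longer. Either is acceptable.
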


\begin{proof}
We prove by induction on $k$. For the base case, \eqref{ep1} clearly holds for $k \in \{0,1\}$. Suppose that \eqref{ep1} holds for all $k\leqslant j$ for some $j\geqslant 1$. We shall show that \eqref{ep1} holds for $k = j+1$. To do so, we proceed by induction on $n\geqslant 0$, given that $k = j+1$ and that \eqref{ep1} holds for all $k\leqslant j$. For the base case, by definitions, we have
$$t^{\mathbf s_2}_{j+1,0} - t^{\mathbf s_1}_{j+1,0} \ =\  b_j - 0 \ =\ b_j\ =\ \sum_{i=0}^j\binom{0}{i}b_{j-i};$$
hence, \eqref{ep1} is true for $k = j+1$ and $n = 0$. Assume that \eqref{ep1} is true for $k = j+1$ and all $n\leqslant \ell$ for some $\ell\geqslant 0$. We show that \eqref{ep1} is true for $k = j+1$ and $n = \ell+1$. Indeed, by the two inductive hypotheses, we have
\begin{align*}
t^{\mathbf s_2}_{j+1,\ell+1} -  t^{\mathbf s_1}_{j+1,\ell+1}&\ =\ \left(t^{\mathbf s_2}_{j+1,\ell} + t^{\mathbf s_2}_{j,\ell}\right) - \left(t^{\mathbf s_1}_{j+1,\ell} + t^{\mathbf s_1}_{j,\ell}\right)\\
&\ =\ \left(t^{\mathbf s_2}_{j+1,\ell}-t^{\mathbf s_1}_{j+1,\ell}\right) + \left(t^{\mathbf s_2}_{j,\ell}-t^{\mathbf s_1}_{j,\ell}\right)\\
&\ =\ \sum_{i=0}^j \binom{\ell}{i}b_{j-i} + \sum_{i=0}^{j-1}\binom{\ell}{i}b_{j-1-i}\\
&\ =\ \sum_{i=0}^j \binom{\ell}{i}b_{j-i} + \sum_{i=1}^j \binom{\ell}{i-1}b_{j-i}\\
&\ =\ b_j + \sum_{i=1}^j \left(\binom{\ell}{i-1}+\binom{\ell}{i}\right)b_{j-i}\ =\ \sum_{i=0}^{j}\binom{\ell+1}{i}b_{j-i}.
\end{align*}
This completes our proof. 
\end{proof}

\begin{lem}\label{kl2}
Fix $k\geqslant 0$ and a sequence $(a_n)_{n\geqslant 0}$. Then 
\begin{equation}\label{e31}P^{(k)}((a_n+1)_{n\geqslant 0})(m)-P^{(k)}((a_n)_{n\geqslant 0})(m)\ =\ \binom{m}{k}, \mbox{ for all }m\geqslant 0.\end{equation}
\end{lem}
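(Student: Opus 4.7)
The plan is to exploit linearity of the zero-seed partial sum operator $P_0$ in order to reduce the identity to the case of the constant sequence $(1)_{n\geq 0}$, and then to evaluate the $k$-partial sum of this constant sequence via the hockey-stick identity \eqref{a1}.

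First, I would observe that $P_0$ is linear: for any sequences $(x_n)$ and $(y_n)$ and any $m\geq 0$,
$$P_0\bigl((x_n+y_n)_{n\geq 0}\bigr)(m) \ =\ \sum_{i=0}^{m-1}(x_i+y_i) \ =\ P_0\bigl((x_n)_{n\geq 0}\bigr)(m) + P_0\bigl((y_n)_{n\geq 0}\bigr)(m),$$
with both sides equal to $0$ when $m=0$. Iterating this $k$ times gives
$$P^{(k)}\bigl((a_n+1)_{n\geq 0}\bigr)(m) - P^{(k)}\bigl((a_n)_{n\geq 0}\bigr)(m) \ =\ P^{(k)}\bigl((1)_{n\geq 0}\bigr)(m),$$
so the dependence on $(a_n)$ drops out and the statement reduces to the single equality $P^{(k)}((1)_{n\geq 0})(m)=\binom{m}{k}$.

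Second, I would prove this remaining identity by induction on $k$. The base case $k=0$ is immediate since $P^{(0)}$ is the identity operator and $\binom{m}{0}=1$. For the inductive step, assuming the formula for $k-1$, the recursion $P^{(k)}=P_0\circ P^{(k-1)}$ yields
$$P^{(k)}\bigl((1)_{n\geq 0}\bigr)(m) \ =\ \sum_{i=0}^{m-1}P^{(k-1)}\bigl((1)_{n\geq 0}\bigr)(i) \ =\ \sum_{i=0}^{m-1}\binom{i}{k-1} \ =\ \binom{m}{k},$$
where the last equality is \eqref{a1} (the terms with $i<k-1$ vanish, so the sum effectively begins at $i=k-1$).

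No genuine obstacle is expected; the proof is essentially immediate once one notices that the difference in question is independent of $(a_n)$ by linearity. A small cautionary note is that Lemma \ref{kl1} does not apply directly here, because the transformation $(a_n)\mapsto(a_n+1)$ alters the input sequence rather than the seed constants $b_i$ of the iterated partial sum operators, so the correction grows like $\binom{m}{k}$ rather than taking the constant-in-$m$ shape predicted by Lemma \ref{kl1}.
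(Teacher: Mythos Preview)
Your proof is correct and takes a somewhat different route from the paper. The paper proceeds by a double induction on $k$ and then on $m$, using at each step the recursion $P^{(\ell+1)}(\cdot)(s+1)=P^{(\ell+1)}(\cdot)(s)+P^{(\ell)}(\cdot)(s)$ together with Pascal's rule $\binom{s}{\ell+1}+\binom{s}{\ell}=\binom{s+1}{\ell+1}$; it never isolates the dependence on $(a_n)$ explicitly. Your argument first invokes linearity of $P_0$ to strip out $(a_n)$ entirely, reducing the statement to $P^{(k)}((1)_{n\geq 0})(m)=\binom{m}{k}$, and then dispatches that by a single induction on $k$ via the hockey-stick identity \eqref{a1}. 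The linearity step makes transparent why the difference is independent of the underlying sequence, and collapsing the inner induction on $m$ into one application of \eqref{a1} is a genuine simplification. Your closing remark about Lemma~\ref{kl1} is accurate but not needed, since neither proof actually invokes it.
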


\begin{proof}
We prove by induction on $k$. Base case: \eqref{e31} is clearly true when $k = 0$. Inductive hypothesis: assume that \eqref{e31} holds for $k\leqslant \ell$ for some $\ell\geqslant 0$. We show that \eqref{e31} holds for $k = \ell+1$, i.e., 
\begin{equation}\label{e40}P^{(\ell+1)}((a_n+1)_{n\geqslant 0})(m)-P^{(\ell+1)}((a_n)_{n\geqslant 0})(m)\ =\ \binom{m}{\ell+1}, \mbox{ for all }m\geqslant 0.\end{equation}
To do so, we induct on $m$. For $m = 0$, both sides of \eqref{e40} are equal to $0$. Inductive hypothesis: suppose that \eqref{e40} is true for all $m\leqslant s$ for some $s\geqslant 0$. We have
\begin{align*}
&P^{(\ell+1)}((a_n+1)_{n\geqslant 0})(s+1)-P^{(\ell+1)}((a_n)_{n\geqslant 0})(s+1)\\
\ =\ &(P^{(\ell+1)}((a_n+1)_{n\geqslant 0})(s)+P^{(\ell)}((a_n+1)_{n\geqslant 0})(s))\\
&\ \qquad -(P^{(\ell+1)}((a_n)_{n\geqslant 0})(s)+P^{(\ell)}((a_n)_{n\geqslant 0})(s))\\
\ =\ &(P^{(\ell+1)}((a_n+1)_{n\geqslant 0})(s)-P^{(\ell+1)}((a_n)_{n\geqslant 0})(s))\\
&\ \qquad + (P^{(\ell)}((a_n+1)_{n\geqslant 0})(s)-P^{(\ell)}((a_n)_{n\geqslant 0})(s))\\
\ =\ &\binom{s}{\ell+1} + \binom{s}{\ell}\ =\ \binom{s+1}{\ell+1}.
\end{align*}
This completes our proof. 
\end{proof}

The proof of the following lemma is similar to those of Lemmas \ref{kl1} and \ref{kl2}, so we move the proof to the appendix. 

\begin{lem}\label{kl3}
For all $k, \ell\geqslant 0$, we have
\begin{equation}\label{e42}P^{(k)}((F_{n+2})_{n\geqslant 0})(\ell)\ =\ P^{(k)}((F_n)_{n\geqslant 0})(\ell) + P^{(k)}((F_n)_{n\geqslant 0})(\ell+1).\end{equation}
\end{lem}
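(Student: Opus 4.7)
The plan is to establish Lemma \ref{kl3} by a double induction, outer on $k$ and inner on $\ell$, mirroring the structure used for Lemmas \ref{kl1} and \ref{kl2}. The key observation that unlocks everything is the basic first-difference identity for the partial sum operator: for any sequence $(a_n)$ and any $k\geqslant 0$, $m\geqslant 0$,
$$P^{(k+1)}((a_n))(m+1)-P^{(k+1)}((a_n))(m)\ =\ P^{(k)}((a_n))(m),$$
which is immediate from the definition $P^{(k+1)}((a_n))(m)=\sum_{i=0}^{m-1}P^{(k)}((a_n))(i)$.

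For the outer base case $k=0$, the identity \eqref{e42} reduces to $F_{\ell+2}=F_\ell+F_{\ell+1}$, which is the Fibonacci recurrence. Assuming \eqref{e42} is true for some fixed $k$ and all $\ell\geqslant 0$, I would prove it for $k+1$ by induction on $\ell$. The inner base case $\ell=0$ is trivial since $P^{(k+1)}((a_n))(0)=0$ and $P^{(k+1)}((a_n))(1)=P^{(k)}((a_n))(0)$ vanishes for the Fibonacci sequences involved (as $F_0=0$ and any higher partial sum evaluated at $0$ is $0$).

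For the inductive step on $\ell$, I would apply the first-difference identity to expand $P^{(k+1)}((F_{n+2}))(\ell+1)$ as $P^{(k+1)}((F_{n+2}))(\ell)+P^{(k)}((F_{n+2}))(\ell)$, then invoke the inner inductive hypothesis on the first summand and the outer inductive hypothesis on the second to rewrite both in terms of $(F_n)_{n\geqslant 0}$. Comparing with the expansions of $P^{(k+1)}((F_n))(\ell+1)$ and $P^{(k+1)}((F_n))(\ell+2)$ via the same first-difference identity, both sides are seen to equal
$$P^{(k+1)}((F_n))(\ell)+P^{(k+1)}((F_n))(\ell+1)+P^{(k)}((F_n))(\ell)+P^{(k)}((F_n))(\ell+1),$$
completing the step.

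No real obstacle is anticipated; the only point requiring a bit of care is keeping the two induction variables and the two inductive hypotheses straight when expanding $P^{(k+1)}((F_{n+2}))(\ell+1)$. Since the paper defers the proof to the appendix precisely because it parallels those of Lemmas \ref{kl1} and \ref{kl2}, I expect a short, routine write-up built entirely on the defining recurrence of $P^{(k)}$ plus the Fibonacci base case.
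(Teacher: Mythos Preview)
Your proposal is correct and follows essentially the same argument as the paper's appendix proof: a double induction with outer variable $k$ and inner variable $\ell$, using the Fibonacci recurrence for the base $k=0$, the vanishing of $P^{(k+1)}((F_n))(0)$ and $P^{(k+1)}((F_n))(1)$ for the inner base $\ell=0$, and then the first-difference identity to split, apply both inductive hypotheses, and regroup exactly as you describe. The only cosmetic difference is that the paper leaves the regrouping in one chain of equalities rather than naming the common four-term expression, but the logic is identical.
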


Before proving Theorem \ref{m4}, we recall \cite[Theorem 1.3]{C2} that gives
\begin{equation}\label{e45}P^{(k)}((F_n)_{n\geqslant 0})(\ell)\ =\ \sum_{j=0}^{\ell-1}\binom{j}{\ell-1-j+k},\mbox{ for }k, \ell\geqslant 0.\end{equation}

\begin{proof}[Proof of Theorem \ref{m4}] We consider the three different cases in the theorem. 

When $k = 1$ and $\ell\geqslant 0$, 
$$\mathcal{A}_{1, 1+\ell} \ =\ \{\emptyset\} \cup \{E\in \mathcal{N}\,:\, E \neq \emptyset, \max E \leqslant 1+\ell, \mbox{ and }\min E > \omega_1(E)\}.
$$
Hence, for a nonempty set $E\in \mathcal{A}_{1, 1+\ell}$, $1\in E$ implies that 
$$1 \ =\ \min E \ >\ \omega_1(E) \ =\ |E|-1,$$
which gives $|E| < 2$. Therefore, $1\in E$ implies that $E = \{1\}$. We can write $\mathcal{A}_{1, 1+\ell}$ as
\begin{align*}
&\{\emptyset\} \cup \{\{1\}\}\cup \{E\subset\{1, 2, \ldots, 1+\ell\}\,:\, E\neq \emptyset \mbox{ and }\min E > |E|\}\\
\ =\ &\{\{1\}\}\cup \{E\subset\{1, 2, \ldots, 1+\ell\}\,:\, E = \emptyset \mbox{ or }\min E > |E|\}.
\end{align*}
We use \cite[Theorem 1]{C1} to obtain $|\mathcal{A}_{1, 1+\ell}| = F_{\ell+2} + 1$. 

When $k\geqslant 2$ and $-k < \ell < 0$, that $a_{k, k+\ell} = F_{k+\ell+1}$ follows from Proposition \ref{kp1}. 

When $k\geqslant 2$ and $\ell \geqslant 0$, we apply Lemma \ref{kl1} with 
$$\begin{cases}\mathbf s_2 &\ =\ (a_{\ell+2, \ell+2})_{\ell\geqslant 0},\\ (t^{\mathbf s_2}_{k-1, \ell})_{\ell\geqslant 0} &\ =\ (a_{k, k+\ell})_{\ell\geqslant 0},\\ (t^{\mathbf s_1}_{k-1, \ell})_{\ell\geqslant 0} &\ =\ (P^{(k-1)}((a_{1, j+1})_{j\geqslant 0})(\ell))_{\ell\geqslant 0},\end{cases}$$
to obtain
\begin{equation}\label{e44}a_{k, k+\ell} - P^{(k-1)}((a_{1, j+1})_{j\geqslant 0})(\ell)\ =\ \sum_{i=0}^{k-2}\binom{\ell}{i}a_{k-i, k-i}\end{equation}
By Theorem \ref{m2} and Proposition \ref{kp1}, we can write \eqref{e44} as 
$$a_{k, k+\ell} - P^{(k-1)}((F_{j+2}+1)_{j\geqslant 0})(\ell)\ =\ 2\sum_{i=0}^{k-2}\binom{\ell}{i}F_{k-i}$$
Using Lemma \ref{kl2}, we arrive at
$$a_{k, k+\ell} \ =\ 2\sum_{i=0}^{k-2}\binom{\ell}{i}F_{k-i} + \binom{\ell}{k-1} + P^{(k-1)}((F_{j+2})_{j\geqslant 0})(\ell),$$
which, due to Lemma \ref{kl3} and then \eqref{e45}, gives
\begin{align*}a_{k, k+\ell} &\ =\ 2\sum_{i=0}^{k-2}\binom{\ell}{i}F_{k-i} + \binom{\ell}{k-1} + P^{(k-1)}((F_{j})_{j\geqslant 0})(\ell)+P^{(k-1)}((F_{j})_{j\geqslant 0})(\ell+1)\\
&\ =\ 2\sum_{i=0}^{k-2}\binom{\ell}{i}F_{k-i} + \binom{\ell}{k-1} + \sum_{j=0}^{\ell-1}\binom{j}{\ell-2-j+k} + \sum_{j=0}^{\ell}\binom{j}{\ell-1-j+k}\\
&\ =\ 2\sum_{i=0}^{k-2}\binom{\ell}{i}F_{k-i} + 2\binom{\ell}{k-1} + \sum_{j=0}^{\ell-1}\binom{j+1}{\ell-1-j+k}\\
&\ =\ 2\sum_{i=0}^{k-2}\binom{\ell}{i}F_{k-i} + 2\binom{\ell}{k-1} + \sum_{j=1}^{\ell}\binom{j}{\ell-j+k},
\end{align*}
as desired. 
\end{proof}

\begin{proof}[Proof of Theorem \ref{c1}]
The proof follows immediately from Theorem \ref{m4} and the identity
\begin{equation}\label{e60}\sum_{i=0}^\ell \binom{\ell}{i}F_{k-i} \ =\ F_{k+\ell}, \mbox{ for all }\ell\geqslant 0\mbox{ and }k\geqslant \ell+2,\end{equation}
which we prove in the appendix. 
\end{proof}

\section{The sequence $(|K_{n}|)_{n\geqslant 2}$}\label{KnFn}

This section proves Theorem \ref{m3} in two different ways. The first proof appears to be more technical but provides more insight (through bijective maps) behind why the identity $|K_n| = F_{n+1}$ holds, while the second proof is relatively shorter as it employs handy formulas. 

\begin{proof}[Proof of Theorem \ref{m3}]
It is easy to verify that $\mathcal{K}_2 = \{\{2\}\}$ and $\mathcal{K}_3 = \{\{3\}\}$. We shall show that $|\mathcal{K}_{n+1}| = |\mathcal{K}_n| + |\mathcal{K}_{n-1}|$ for $n\geqslant 3$. Consider the injective map 
\begin{align*}
    f: \mathcal{K}_n&\ \longrightarrow\ \mathcal{K}_{n+1}\\
    F &\ \longmapsto\ F+1.
\end{align*}
The function $f$ is well-defined because $n+1\in F+1$, $|F+1|\neq 2$, and 
$$\min (F+1)\ =\ \min F + 1 \ > \ \omega_{2,3}(F) + 1\ \geqslant\ \omega_{2,3}(F+1).$$
It remains to show that there is a bijective map $g: \mathcal{K}_{n-1}\longrightarrow \mathcal{K}_{n+1}\backslash f(\mathcal{K}_n)$.  To do so, we prove the following easy claims.
\begin{claim}\label{cl1}
 If $F\in \mathcal{K}_{n-1}$ with $\min F  = 2$ and $|F| > 1$, then $n\geqslant 5$ and $F = \{2, 3, n-1\}$.
\end{claim}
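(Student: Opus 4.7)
The plan is to unpack the three defining conditions of $\mathcal{K}_{n-1}$ together with the extra hypotheses $\min F = 2$ and $|F|>1$, and then do a short case split on whether $3 \in F$. The argument should be quick; the only real content is correctly reading off $\omega_{2,3}(F)$.

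First I would note that the hypotheses $|F|>1$ and $|F|\neq 2$ (from membership in $\mathcal{K}_{n-1}$) immediately give $|F|\geqslant 3$. Next, since $\min F = 2$, the Schreier-type inequality $\min F > \omega_{2,3}(F)$ collapses to $\omega_{2,3}(F) \leqslant 1$. By the definition $\omega_{2,3}(F) = |F\setminus\{2,3\}|$, and using $2\in F$, we have $\omega_{2,3}(F) = |F|-1$ if $3\notin F$ and $\omega_{2,3}(F) = |F|-2$ if $3\in F$.

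Then I would split into two cases. If $3\notin F$, then $|F|-1 \leqslant 1$ forces $|F|\leqslant 2$, contradicting $|F|\geqslant 3$; so this case is impossible. Hence $3\in F$, and $|F|-2 \leqslant 1$ combined with $|F|\geqslant 3$ forces $|F|=3$. Writing $F=\{2,3,x\}$ and applying $\max F = n-1$ gives $x = n-1$, so $F = \{2,3,n-1\}$. Finally, since $F$ has three distinct elements, we need $n-1>3$, that is, $n\geqslant 5$.

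The main obstacle is essentially nonexistent here; the only place to be careful is in correctly computing $\omega_{2,3}(F)$ in both sub-cases (noting that $2$ and $3$ each contribute $0$ to the weight while every other element contributes $1$), and in converting the strict inequalities into the right bounds on $|F|$.
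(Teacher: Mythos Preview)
Your proof is correct and follows essentially the same approach as the paper: the paper also first obtains $|F|\geqslant 3$ and $\omega_{2,3}(F)\leqslant 1$, and then concludes $F=\{2,3,n-1\}$ with $n\geqslant 5$. Your version simply makes the implicit case split on whether $3\in F$ explicit, which is a harmless elaboration.
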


\begin{proof}
Since  $|F|>1$ and $|F|\neq 2$, we know that $|F|\geqslant 3$. We use $2 = \min F > \omega_{2,3}(F)$ to obtain $\omega_{2,3}(F) \leqslant 1$. We deduce from $|F|\geqslant 3$ and $\omega_{2,3}(F)\leqslant 1$ that $F = \{2, 3, n-1\}$ and $n\geqslant 5$. 
\end{proof}

\begin{claim}\label{cl2}
If $F\in \mathcal{K}_{n-1}$ with $\min F = 3$ and $|F| > 1$, then $n\geqslant 6$ and $F = \{3, m, n-1\}$ for some $3 < m < n-1$.
\end{claim}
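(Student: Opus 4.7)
The plan is to mimic the short computation used in Claim~\ref{cl1}, tracking both the size constraint coming from $|F|\ne 2$ and the weighted Schreier-type inequality $\min F>\omega_{2,3}(F)$, and then read off what the three-element structure forces on~$n$.

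First I would note that since $F\in\mathcal{K}_{n-1}$ we have $\max F = n-1$, $|F|\ne 2$, and $\min F > \omega_{2,3}(F)$; combined with the hypothesis $|F|>1$ this immediately yields $|F|\geqslant 3$. Next I would compute $\omega_{2,3}(F)$ explicitly: because $\min F = 3$, the element $2$ is not in $F$, while $3\in F$, so $|F\cap\{2,3\}|=1$ and therefore $\omega_{2,3}(F)=|F|-1$.

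Plugging this into the inequality $\min F>\omega_{2,3}(F)$ gives $3>|F|-1$, that is, $|F|\leqslant 3$. Combined with the lower bound $|F|\geqslant 3$ from the first step, this forces $|F|=3$. Since $\min F=3$ and $\max F=n-1$, the set $F$ must have the form $\{3,m,n-1\}$ for some integer $m$ strictly between $3$ and $n-1$.

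Finally I would observe that for such an integer $m$ with $3<m<n-1$ to exist at all, we need $n-1\geqslant 5$, i.e.\ $n\geqslant 6$, which gives the asserted lower bound on $n$. The argument is essentially a one-line calculation once $\omega_{2,3}(F)=|F|-1$ is noted, so there is no real obstacle; the only subtle point is to remember that $\min F = 3$ rules out $2\in F$, which is exactly what makes $\omega_{2,3}$ drop only by $1$ rather than by $2$.
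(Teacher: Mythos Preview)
Your proof is correct and follows essentially the same route as the paper: both use $|F|>1$ and $|F|\neq 2$ to get $|F|\geqslant 3$, then combine $\min F=3$ with $\omega_{2,3}(F)=|F|-1$ to force $|F|=3$, from which the form $\{3,m,n-1\}$ and the bound $n\geqslant 6$ follow. Your write-up is in fact a bit more explicit than the paper's in justifying why $\omega_{2,3}(F)=|F|-1$ (noting $2\notin F$) and in deriving $n\geqslant 6$ from the existence of $m$ with $3<m<n-1$.
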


\begin{proof}
Since $|F| > 1$ and $|F|\neq 2$, we know that $|F|\geqslant 3$. That $\min F = 3$ implies that $3 > \omega_{2,3}(F) = |F| - 1$. Hence, $|F|\leqslant 3$ and so, $|F| = 3$. This gives us the desired conclusion. 
\end{proof}

Claims \ref{cl1} and \ref{cl2} allow us to define $g: \mathcal{K}_{n-1}\longrightarrow \mathcal{K}_{n+1}\backslash f(\mathcal{K}_n)$ as follows:

$$g(F)\ =\ \begin{cases}  \{2,3, n+1\}, &\mbox{ if } F = \{n-1\},\\
\{3, 5, n+1\}, &\mbox{ if } |F| > 1\mbox{ and }\min F = 2,\\
(F\backslash \{3\}+2)\cup \{3\}, &\mbox{ if }|F| > 1\mbox{ and }\min F = 3,\\
 (F+2) \cup \{|F| + 2\}, &\mbox{ if } |F| > 1\mbox{ and }\min F \geqslant 4. \end{cases}$$

\begin{claim}\label{cl3}
We have $g(\mathcal{K}_{n-1})\subset \mathcal{K}_{n+1}$. 
\end{claim}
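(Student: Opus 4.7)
The plan is to traverse the four cases in the piecewise definition of $g$ and verify, in each case, the three defining conditions of membership in $\mathcal{K}_{n+1}$: that $\max g(F) = n+1$, that $|g(F)| \neq 2$, and that $\min g(F) > \omega_{2,3}(g(F))$. Cases 1 and 2 are essentially direct. For $F = \{n-1\}$ the output $\{2,3,n+1\}$ has size $3$, maximum $n+1$, and minimum $2 > 1 = \omega_{2,3}(\{2,3,n+1\})$. For $|F| > 1$ with $\min F = 2$, Claim \ref{cl1} forces $F = \{2,3,n-1\}$ with $n \geqslant 5$, and the output $\{3,5,n+1\}$ has size $3$, maximum $n+1$ (using $n+1 > 5$), and satisfies $\min = 3 > 2 = \omega_{2,3}(\{3,5,n+1\})$.

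For Case 3, I would invoke Claim \ref{cl2} to write $F = \{3, m, n-1\}$ with $3 < m < n-1$, so that $g(F) = \{3, m+2, n+1\}$. The inequality $m < n-1$ guarantees $m+2 \leqslant n < n+1$, so the three listed elements are distinct, the maximum is $n+1$, and $\min g(F) = 3 > 2 = \omega_{2,3}(g(F))$.

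Case 4 is where the argument needs a genuine computation. Here $|F| \geqslant 3$ (since $|F| > 1$ and $|F| \neq 2$) and $\min F \geqslant 4$, so $2, 3 \notin F$ and therefore $\omega_{2,3}(F) = |F|$. The hypothesis $\min F > \omega_{2,3}(F)$ sharpens to $\min F \geqslant |F|+1$. This yields $\min(F+2) \geqslant |F|+3 > |F|+2$, which shows that the adjoined element $|F|+2$ is a genuinely new, and indeed smallest, element of $g(F)$. Consequently $|g(F)| = |F|+1 \geqslant 4$, $\max g(F) = \max F + 2 = n+1$, and $\min g(F) = |F|+2$. Since $2,3 \notin g(F)$, we have $\omega_{2,3}(g(F)) = |g(F)| = |F|+1 < |F|+2 = \min g(F)$, giving the Schreier-type inequality. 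The main subtlety is precisely this fourth case: one must use the refined arithmetic $\min F \geqslant |F|+1$ both to avoid a collision between $|F|+2$ and $F+2$ and to preserve the strict inequality $\min > \omega_{2,3}$ after the shift. Once these four checks are in place, the inclusion $g(\mathcal{K}_{n-1}) \subset \mathcal{K}_{n+1}$ follows immediately.
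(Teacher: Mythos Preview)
Your proof is correct and follows essentially the same approach as the paper: both arguments traverse the four cases of the definition of $g$ and verify the three membership conditions $\max g(F)=n+1$, $|g(F)|\neq 2$, and $\min g(F)>\omega_{2,3}(g(F))$, invoking Claims~\ref{cl1} and~\ref{cl2} where needed. The only difference is organizational---you handle each case in full before moving on, whereas the paper checks one property at a time across all cases---and you spell out a few details (e.g., distinctness of elements in Case~3, the verification $|g(F)|\neq 2$) that the paper leaves implicit.
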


\begin{proof}
Let $F\in \mathcal{K}_{n-1}$. If $F = \{n-1\}$, then $\max g(F) = \max \{2, 3, n+1\} = n+1$ because $n + 1\geqslant 4$. If $|F| > 1\mbox{ and }\min F = 2$, then Claim \ref{cl1} gives $\max g(F) =  n+1$. If $|F| > 1\mbox{ and }\min F = 3$, then by Claim \ref{cl2}, $\max g(F) = n+1$. 
Lastly, for $|F| > 1\mbox{ and }\min F \geqslant 4$,
$$\max g(F) \ =\ \max ((F+2) \cup \{|F| + 2\})\ =\ n+1,$$
because $|F|\leqslant n-1$. We have shown that $\max g(F) = n+1$. 

It is easy to verify that $|g(F)|\neq 2$. 

Let us show that $\min g(F) > \omega_{2,3}(g(F))$, which is obvious if $F = \{n-1\}$ or if $|F| > 1$ and $\min F = 2$.
If $|F| > 1$ and $\min F = 3$, Claim \ref{cl2} gives
$$\omega_{2,3}(g(F))\ =\ |F|-1\ = \ 2 \ <\ 3 \ =\ \min g(F).$$
If $|F| > 1$ and $\min F \geqslant 4$, then $F\in \mathcal{K}_{n-1}$ implies that $\min F > \omega_{2,3}(F) = |F|$. Then
$$\omega_{2,3}(g(F)) \ =\ |F|+1\ <\ |F|+2\ =\ \min g(F).$$
This completes our proof that $g(F)\in \mathcal{K}_{n+1}$.
\end{proof}

\begin{claim}\label{cl4}
We have $g(\mathcal{K}_{n-1})\cap f(\mathcal{K}_n) = \emptyset$. 
\end{claim}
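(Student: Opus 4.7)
The plan is to proceed by case analysis, mirroring the four-case definition of $g$. For each case I will exhibit the hypothetical preimage $F' := g(F) - 1$ and show that $F'$ violates one of the defining conditions of $\mathcal{K}_n$, which prevents $g(F)$ from lying in $f(\mathcal{K}_n)$.

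A key preliminary observation that I will record first is that every $E \in \mathcal{K}_m$ satisfies $\min E \geqslant 2$: otherwise $1 \in E$ forces $\omega_{2,3}(E) \geqslant 1 \geqslant \min E$, contradicting $\min E > \omega_{2,3}(E)$. Consequently every element of $f(\mathcal{K}_n) = \mathcal{K}_n + 1$ has minimum at least $3$, which immediately handles the first branch of $g$ since $g(F) = \{2,3,n+1\}$ has minimum $2$.

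For the second and third branches I will invoke Claims \ref{cl1} and \ref{cl2}, which pin down $F$ explicitly, making $g(F)$ equal to $\{3,5,n+1\}$ and $\{3,m+2,n+1\}$ respectively. The putative preimages $F' = g(F)-1$ then become $\{2,4,n\}$ and $\{2,m+1,n\}$. In both, $F'$ is a three-element set with minimum $2$ while $\omega_{2,3}(F') = 2$; this equality (rather than a strict inequality) eliminates $F'$ from $\mathcal{K}_n$.

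The fourth branch demands the most care, and I expect it to be the main obstacle. Here $\min F \geqslant 4$ forces $2,3 \notin F$, so $\omega_{2,3}(F) = |F|$, and together with $|F| \neq 2$ the defining inequality yields $3 \leqslant |F| < \min F$. I will first verify the chain $|F|+2 \leqslant \min F + 1 < \min F + 2 \leqslant \min(F+2)$, so that $|F|+2$ is the genuine minimum of $g(F)$ and $|g(F)| = |F|+1$. The hypothetical preimage $F' = (F+1) \cup \{|F|+1\}$ then satisfies $\min F' = |F|+1 = |F'|$, and since $\min F' \geqslant 5$ we also have $\omega_{2,3}(F') = |F'|$; the resulting equality again excludes $F'$ from $\mathcal{K}_n$. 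The difficulty here is purely clerical: keeping the shifts, cardinalities, and strict-versus-non-strict comparisons aligned across all four branches.
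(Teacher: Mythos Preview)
Your proof is correct and follows essentially the same case-by-case strategy as the paper; in particular your fourth case is identical to the paper's, and your treatment of cases~2 and~3 (computing $g(F)-1$ directly and checking $\min = \omega_{2,3}$) is a minor variant of the paper's ``second smallest element'' argument. One small slip: in case~4 you claim $\min F' \geqslant 5$, but since $|F|$ can equal $3$ you only get $\min F' = |F|+1 \geqslant 4$; this is still enough to conclude $2,3\notin F'$ and hence $\omega_{2,3}(F') = |F'|$, so the argument goes through unchanged.
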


\begin{proof}
Let $F\in \mathcal{K}_{n-1}$. If $F = \{n-1\}$, it follows from the definition of $g$ that $2\in g(F)$.  However, $2\notin G$, for all $G\in f(\mathcal{K}_n)$. Hence, $g(F)\notin f(\mathcal{K}_n)$.

If $|F| > 1$ and $\min F \in \{2,3\}$, then $\min g(F) = 3$, and the second smallest element of $g(F)$ is at least $5$. On the other hand, for $G\in f(\mathcal{K}_n)$ with $|G|\geqslant 3$ and $\min G = 3$, we know that 
$G-1\in \mathcal{K}_n$, $\min (G-1) = 2$, and $|G-1|\geqslant 3$. By Claim \ref{cl1}, $G-1 = \{2, 3, n\}$ for some $n\geqslant 4$. Hence, $G = \{3, 4, n+1\}$. Since the second smallest element of $G$ is $4$, it must be that $g(F)\neq G$. Therefore, $g(F)\notin f(\mathcal{K}_n)$. 

If $|F| > 1\mbox{ and }\min F \geqslant 4$, suppose, for a contradiction, that $g(F)\in f(\mathcal{K}_n)$, i.e., $(F+2) \cup \{|F| + 2\}\in f(\mathcal{K}_n)$. It follows that $H:= (F+1)\cup \{|F|+1\}\in \mathcal{K}_n$. It follows from $|F|\geqslant 3$ and $\min F\geqslant 4$ that $\min H\geqslant 4$. Hence, 
$$\omega_{2,3}(H) \ =\ |F|+1 \ =\ \min H,$$
which contradicts that $H\in \mathcal{K}_n$. 
\end{proof}

Claims \ref{cl3} and \ref{cl4} guarantee that $g$ is well-defined. Next, we show that $g$ is one-to-one. 
Let 
\begin{align*}
    \mathcal{A} &\ =\ \{F\in \mathcal{K}_{n-1}: |F| > 1\mbox{ and }\min F = 2\},\\
    \mathcal{B} &\ =\ \{F\in \mathcal{K}_{n-1}: |F| > 1\mbox{ and }\min F = 3\},\mbox{ and }\\
    \mathcal{C} &\ =\ \{F\in \mathcal{K}_{n-1}: |F| > 1\mbox{ and }\min F \geqslant  4\}.
\end{align*}
Pick $F_1\in \mathcal{A}, F_2\in \mathcal{B}$, and $F_3\in \mathcal{C}$. Note that $\min g(F_1) = \min g(F_2) = 3$ and $\min g(F_3)\geqslant 5$, so $g(F_1)\neq g(F_3)$ and $g(F_2)\neq g(F_3)$. Furthermore, the second smallest element of $g(F_1)$ is $5$, while the second smallest element of $g(F_2)$ is at least $6$. Hence, $g(F_1)\neq g(F_2)$. We have shown that $g(\mathcal{A}), g(\mathcal{B})$, and $g(\mathcal{C})$ are pairwise disjoint. Finally, $\min g(\{n-1\}) = 2$ guarantees that $g(\{n-1\})\notin g(\mathcal{A})\cup g(\mathcal{B})\cup g(\mathcal{C})$. Therefore, if $F, G\in \mathcal{K}_{n-1}$ such that $g(F) = g(G)$, then $F, G$ must be both equal to $\{n-1\}$ or belong to the same collection $\mathcal{A}$, $\mathcal{B}$, or $\mathcal{C}$. It follows from the definition of $g$ that $F = G$. 

We prove that $g$ is surjective. Pick $E\in \mathcal {K}_{n+1}\backslash f(\mathcal{K}_n)$. Since $E\notin 
f(\mathcal{K}_n)$, $E-1\notin \mathcal{K}_n$, which implies that 
\begin{equation}\label{e50}\min (E-1)\ \leqslant\ \omega_{2,3}(E-1).\end{equation}

Case 1: if $\min E = 2$, then Claim \ref{cl1} gives $n\geqslant 3$ and $E = \{2, 3, n+1\}$. In this case, $g(\{n-1\}) = E$.

Case 2: if $\min E = 3$, then \eqref{e50} gives $|E|\geqslant \omega_{2,3}(E-1) \geqslant 2$, so Claim \ref{cl2} states that $n\geqslant 4$, and $E$ is of the form $\{3, m, n+1\}$ for $4\leqslant m\leqslant n$. Note that $m\neq 4$; otherwise, $E = f(\{2,3, n\})\in f(\mathcal{K}_n)$. Hence, $E = \{3, m, n+1\}$ for $5\leqslant m\leqslant n$.

If $m = 5$, $f(\{2, 3, n-1\}) = E$.

If $m > 5$, $f(\{3, m-2, n-1\}) = E$.

Case 3: if $\min E\geqslant 4$, then 
\begin{equation}\label{e52}
\min E \ >\ |E|.     
\end{equation}Suppose, for a contradiction, that $\min E = 4$. The set $E-1$ is in $\mathcal{K}_n$ because $\max (E-1) = n, \omega_{2,3}(E-1) = |E|-1 < \min (E-1)$, and $|E-1| = |E|\neq 2$. Hence, $E = f(E-1)\in f(\mathcal{K}_n)$, a contradiction. Therefore, $\min E \geqslant 5$. Define $F = (E\backslash \min E)-2$. By \eqref{e50},
\begin{equation}\label{e51}
    4\ \leqslant\ \min (E-1)\ \leqslant\ \omega_{2,3}(E-1) \ =\ |E|.
\end{equation}
It follows from \eqref{e52} and \eqref{e51} that $\min E = |E|+1$.
Furthermore, $F\in \mathcal{K}_{n-1}$ because $\max F = \max E -2 =n-1$, $|F| = |E|-1 \geqslant 3$, and 
$$\omega_{2,3}(F) \ =\ |F| \ =\ |E|-1\ =\ \min E - 2 \ <\ \min F.$$
By the definition of $g$, we have 
$$g(F) \ =\ (E\backslash \min E)\cup \{|F|+2\}\ =\ (E\backslash \min E)\cup \{|E|+1\}\ =\ E.$$
This completes our proof. 
\end{proof}

\begin{proof}[Alternative proof of Theorem \ref{m3}]
The theorem holds for $n\in \{1,2\}$. Assume that $n\geqslant 3$. We consider four different cases.

Case 1: $2, 3\in E$. Since $\max E = n+1\in E$ and $n+1\geqslant 4$, the set $\{2,3,n+1\}$ has size $3$ and is a subset of $E$. It follows from $$2 \ =\ \min E \ >\ \omega_{2,3}(E) \ =\  |E|-2$$
that $|E|\leqslant 3$; hence, $E = \{2, 3, n+1\}$.

Case 2: $2\in E$ and $3\notin E$. In this case, 
$$2 \ =\ \min E\ >\ \omega_{2,3}(E)\ =\ |E|-1.$$
Hence, $|E|\leqslant 2$, so $|E| = 1$ because $|E|\neq 2$. This contradicts that $2, n+1\in E$. 

Case 3: $2\notin E$ and $3\in E$. Then 
$$3 \ =\ \min E \ >\ \omega_{2,3}(E) \ =\ |E|-1,$$
which implies that $|E|\leqslant 3$. Hence, $3, n+1\in E$ and $|E| \neq 2$ imply that $E = \{3, \ell, n+1\}$ for some $4\leqslant \ell\leqslant n$. The number of sets $E$ in this case is $n-3$.

Case 4: $2\notin E$ and $3\notin E$. We need to count the number of sets $E$ satisfying 
\begin{itemize}
\item $E\subset \{4, 5, \ldots, n+1\}$,
\item $\max E = n+1$,
\item $\min E > |E|$, and
\item $|E|\neq 2$.
\end{itemize}
Let $k = \min E$. If $k = n+1$, then $E = \{n+1\}$. Assume that $k < n+1$. The set $E\backslash \{k, n+1\}$ is a subset of $\{k+1, \ldots, n\}$ and has size at most $k-3$ because $\min E > |E|$. Therefore, the number of such sets $E$ is
\begin{align*}
\sum_{k=4}^n\sum_{j=1}^{k-3}\binom{n-k}{j}&\ =\ \sum_{j=1}^{n-3}\sum_{k=j+3}^n \binom{n-k}{j}\ =\ \sum_{j=1}^{n-3}\sum_{r=0}^{n-j-3}\binom{r}{j}\\
&\ =\ \sum_{j=1}^{\lfloor (n-3)/2\rfloor}\sum_{r=0}^{n-j-3}\binom{r}{j}\\
&\ =\ \sum_{j=1}^{\lfloor (n-3)/2\rfloor}\binom{n-j-2}{j+1}\mbox{ by \eqref{a1}}\\
&\ =\ \sum_{j=2}^{\lfloor(n-1)/2\rfloor}\binom{n-1-j}{j}\\
&\ =\ F_n - 1- (n-2) \mbox{ by \eqref{a2}}.
\end{align*}
We have shown that there are $F_n-(n-2)$ sets $E$ (including $\{n+1\}$) that belong to this case. 

Summing over all cases, we obtain the total count of $F_n$, as desired. 
\end{proof}

\section{Appendix}
\begin{proof}[Proof of \eqref{e1}]
For $k\leqslant 1$, let 
\begin{align*}\mathcal{A} &\ := \ \{E\in \mathcal{N}:E=\emptyset \mbox{ or } \min E > \omega_k (E)\}\mbox{ and }\\
\mathcal{B}&\ := \ \mathcal{S}^{NMAX} \cup \{E\in \mathcal{S}^{MAX}: k \in E\}.
\end{align*}
We shall show that $\mathcal{A} = \mathcal{B}$. Let us rewrite $\mathcal{A} = \mathcal{A}_1 \cup \mathcal{A}_2$, where 
\begin{align*}
\mathcal{A}_1 \ &= \ \{E\in \mathcal{N} :k\in E \mbox{ and } \min E > |E|-1\}\mbox{ and }\\
\mathcal{A}_2 \ &= \ \{E\in \mathcal{N}: (k\notin E \mbox{ and } \min E > |E|) \mbox{ or } E = \emptyset\}. 
\end{align*}

First, we show $\mathcal{A}\subset\mathcal{B}$. Pick $F$ in $\mathcal{A}_1$. If $F \in \mathcal{S}^{MAX}$, then $F \in \{E \in \mathcal{S}^{MAX}: k\in E\}\subset \mathcal{B}$ .
If $F \in S^{NMAX}$, then $F \in \mathcal{B}$ by definition. Hence, $\mathcal{A}_1\subset \mathcal{B}$. 
Furthermore, $\mathcal{A}_2$ is a subset of $\mathcal{S}^{NMAX}$. Therefore, $\mathcal{A} = \mathcal{A}_1 \cup \mathcal{A}_2 \subset \mathcal{B}$.

Next, we prove that $\mathcal{B} \subset \mathcal{A}$ by writing
\begin{align*}S^{NMAX} &\ = \ \{E\in\mathcal{N}: \min E > |E| \mbox{ or } E=\emptyset\}\\
&\ =\ \{E\in\mathcal{N}: (k\notin E \mbox{ and }\min E > |E|) \mbox{ or } E=\emptyset\}\\ 
&\qquad \cup \{ E\in\mathcal{N}:k\in E\mbox{ and }\min E>|E|\},\end{align*}
which is clearly in $\mathcal{A}_1\cup \mathcal{A}_2$. 
Moreover, 
$$\{E\in \mathcal{S}^{MAX}\,:\, k\in E\} \ = \ \{E\in \mathcal{N}\,:\, k\in E \mbox{ and } \min E=|E|\} \ \subset\ \mathcal{A}_1.$$ 
Therefore, $\mathcal{B} \subset \mathcal{A},$ which completes our proof.
\end{proof}

\begin{proof}[Proof of Lemma \ref{kl3}]
First, we induct on $k$: for $\ell\geqslant 0$, we have
$$\begin{cases}
    P^{(0)}((F_{n+2})_{n\geqslant 0})(\ell) &\ =\ F_{\ell+2}\\
    P^{(0)}((F_n)_{n\geqslant 0})(\ell) + P^{(0)}((F_n)_{n\geqslant 0})(\ell+1) &\ =\ F_{\ell} + F_{\ell+1}\ =\ F_{\ell+2}.
\end{cases}$$
Hence, \eqref{e42} holds for $k = 0$. Inductive hypothesis: suppose that \eqref{e42} is true for $k\geqslant 0$. We show that it is true for $k + 1$, i.e.,
\begin{equation}\label{e43}P^{(k+1)}((F_{n+2})_{n\geqslant 0})(\ell)\ =\ P^{(k+1)}((F_n)_{n\geqslant 0})(\ell) + P^{(k+1)}((F_n)_{n\geqslant 0})(\ell+1), \mbox{ for all }\ell\geqslant 0.\end{equation}
We induct on $\ell$. Clearly, \eqref{e43} is true for $\ell = 0$, because then both sides are $0$. Inductive hypothesis: suppose that   \eqref{e43} holds for some $\ell\geqslant 0$. 
By the inductive hypotheses, we write
\begin{align*}
P^{(k+1)}((F_{n+2})_{n\geqslant 0})(\ell+1)&\ =\ P^{(k+1)}((F_{n+2})_{n\geqslant 0})(\ell) + P^{(k)}((F_{n+2})_{n\geqslant 0})(\ell)\\
&\ =\ (P^{(k+1)}((F_{n})_{n\geqslant 0})(\ell)+P^{(k+1)}((F_{n})_{n\geqslant 0})(\ell+1)) + \\
&\qquad (P^{(k)}((F_{n})_{n\geqslant 0})(\ell) + P^{(k)}((F_{n})_{n\geqslant 0})(\ell+1))\\
&\ =\ (P^{(k+1)}((F_{n})_{n\geqslant 0})(\ell)+P^{(k)}((F_{n})_{n\geqslant 0})(\ell)) + \\
&\qquad (P^{(k+1)}((F_{n})_{n\geqslant 0})(\ell+1) + P^{(k)}((F_{n})_{n\geqslant 0})(\ell+1))\\
&\ =\ P^{(k+1)}((F_{n})_{n\geqslant 0})(\ell+1) + P^{(k+1)}((F_{n})_{n\geqslant 0})(\ell+2),
\end{align*}
which finishes our proof of \eqref{e43}. 
\end{proof}

\begin{proof}[Proof of \eqref{e60}]
We prove by induction on $\ell$. Base case: \eqref{e60} clearly holds for $\ell = 0$. Inductive hypothesis: suppose that \eqref{e60} holds for some $\ell\geqslant 0$. We show that it holds for $\ell+1$. We have
\begin{align*}
\sum_{i=0}^{\ell+1}\binom{\ell+1}{i}F_{k-i}&\ =\ F_k + F_{k-\ell-1} + \sum_{i=1}^{\ell} \binom{\ell+1}{i}F_{k-i}\\
&\ =\ F_k + F_{k-\ell-1}  + \sum_{i=1}^{\ell}\binom{\ell}{i}F_{k-i} + \sum_{i=1}^{\ell}\binom{\ell}{i-1}F_{k-i}\\
&\ =\ \sum_{i=0}^{\ell}\binom{\ell}{i}F_{k-i} +  \sum_{i=0}^{\ell}\binom{\ell}{i}F_{k-1-i}\\
&\ =\ \left(\sum_{i=0}^{\ell}\binom{\ell}{i}F_{k+1-i} - \sum_{i=0}^{\ell}\binom{\ell}{i}F_{k-1-i}\right)+  \sum_{i=0}^{\ell}\binom{\ell}{i}F_{k-1-i}\\
&\ =\ \sum_{i=0}^{\ell}\binom{\ell}{i}F_{k+1-i} \ =\ F_{k+1+\ell},
\end{align*}
where the last equality is due to the inductive hypothesis. This completes our proof. 
\end{proof}


\ \\
\end{document}